\title{Deflation Techniques for Finding Multiple Local Minima of a Nonlinear Least Squares Problem}
\author{Alban Bloor Riley\footnote{Corresponding author.} \thanks{Department of Mathematics, The University of Manchester, Manchester M13 9PL, United Kingdom, \texttt{alban.bloorriley@manchester.ac.uk}, \texttt{marcus.webb@manchester.ac.uk}} \and Marcus Webb\footnotemark[2] \and Michael L.~Baker\thanks{The University of Manchester, Department of Chemistry, Manchester M13 9PL, United Kingdom, The University of Manchester at Harwell, Diamond Light Source, Harwell Campus, OX11 0DE, UK, \texttt{michael.baker@manchester.ac.uk}}}
\date{July 2024}
\begin{document}
\maketitle
\begin{abstract}
  In this paper we generalize the technique of deflation to define two new  methods to systematically find many local minima of a nonlinear least squares problem. The methods are based on the Gauss--Newton algorithm, and as such do not require the calculation of a Hessian matrix. They also require fewer deflations than for applying the deflated Newton method on the first order optimality conditions, as the latter finds all stationary points, not just local minima. One application of interest covered in this paper is the inverse eigenvalue problem (IEP) associated with the modelling of spectroscopic data of relevance to the physical and chemical sciences. Open source MATLAB code is provided at \url{https://github.com/AlbanBloorRiley/DeflatedGaussNewton}.
\end{abstract}
\begin{AMS}
    90C53, 65K05, 65K10, 90C26, 49M15
\end{AMS}
\begin{keywords}
% \textbf{Keywords.} 
Gauss--Newton method, deflation, nonlinear least squares
\end{keywords}

\section{Introduction} 
Nonlinear optimization problems often have many local minima. In many cases, finding an arbitrary one of them is not sufficient for the underlying application, and it is a global minimum that is sought, or several local minima that are all of interest. In this paper we generalize the technique of deflation to systematically find multiple minima of a nonlinear least squares problem. 

Deflation techniques were first analysed by Wilkinson in 1963 to find multiple roots of  polynomial equations by dividing the polynomial by the linear factor corresponding to the computed roots \cite{wilkinson_rounding_1963}. He demonstrated that the computed roots could become inaccurate if the roots are not found in ascending order of size. The method was later extended to systems of nonlinear algebraic equations in 1971 by Brown and Gearhart \cite{brown_deflation_1971}. Unfortunately they also found that deflation was often unstable and could lead to divergence.

Decades later, in 2015 Farrell et al.~discovered a simple improvement to the deflation process \cite{farrell_deflation_2020,farrell_deflation_2015}: rather than multiplying the rootfinding problem by $|y - x|^{-1}$, for example, where $y$ is a previously computed root, one can instead multiply by $1 + |y-x|^{-1}$. This general idea of ``shifting'' the deflation operation means that away from the previously computed root, the deflated problem is well-approximated by the original problem, stabilizing the process of computing subsequent roots.

How to apply deflation techniques to optimization problems is an interesting question. The obvious approach is to apply deflation to the rootfinding problem associated with the first order optimality conditions \cite{papadopoulos_computing_2021, friedland_formulation_1987}. In this paper, however, we apply deflation techniques directly to the Gauss--Newton nonlinear least squares optimization algorithm. To do this, we define conditions on deflation operators specifically for optimization algorithms, leveraging a finer understanding of what deflation does to the behaviour of rootfinding algorithms. These new methods do not require the (possibly expensive) calculation of Hessian matrices nor do they converge to local maxima or saddle points.

In Section \ref{sec. Deflation Intro} we discuss the theory of deflation for rootfinding problems. In Section \ref{sec. NLLS} we discuss the deflated Newton method, used in \cite{farrell_deflation_2020} and \cite{papadopoulos_computing_2021} to find multiple solutions to nonlinear PDEs, then introduce and prove properties of two new deflated methods, the ``good'' and ``bad'' deflated Gauss--Newton methods. Finally, in Section \ref{sec: Application} we conduct numerical experiments to compare our new methods to the existing deflated Newton method and the MultiStart method in MATLAB \cite{inc_global_2024}. Open source MATLAB code for all experiments is provided at \url{https://github.com/AlbanBloorRiley/DeflatedGaussNewton}.

\section{Deflation techniques for rootfinding}\label{sec. Deflation Intro}
Given a smooth function $r:\mathbb R^m\rightarrow\mathbb R^m$, and its Jacobian $J_r:\mathbb R^m\rightarrow\mathbb R^{m\times m}$  the system of equations $ r( x) = 0$  can be solved using the Newton method given in Algorithm \ref{alg. Newton}. If we wish to find multiple solutions of $r$ we may use the deflated Newton method in Algorithm \ref{alg. Deflated Newton}. A deflation operator should satisfy the following to be effective \cite{farrell_deflation_2020}:

\begin{definition}{(\cite{farrell_deflation_2020})}\label{def. Deflation Operator}
Let $r$ be as above and let $y_1,\dots,y_n \in \mathbb R^m$. We say that $\mu = \mu(\cdot ;y_1,\ldots,y_n):\mathbb R^m \setminus \{y_1,\ldots,y_n\} \to \mathbb R$ is a deflation operator for $r$ if it satisfies the following:
\begin{enumerate}
    \item The function $\nabla\mu(x;y_1,\dots,y_n)$ exists and is continuous for all $x \in \mathbb{R}^m \setminus \{y_1,\ldots,y_n\}$.
    \item $\displaystyle \liminf_{ x \rightarrow y_i} ||\mu(x;y_1,\dots,y_n)r( x)||_2 >0 $.
    \item If $r(x)=0$ and $x\notin \{y_1,\dots,y_n\}$ then $\mu(x;y_1,\dots,y_n) r(x) =0$.
    \item  If $r(x) \neq 0$ then $\mu(x;y_1,\dots,y_n)r(x)\neq0$.
\end{enumerate}
\end{definition}
\begin{algorithm}\caption{The Newton method for nonlinear equations}\label{alg. Newton}
\setstretch{1.25}
\begin{algorithmic}
\State $ x^0 =$ initial guess
\While{$||r(x^k)||_2<$ tol}
\State Solve $J_r(x^k)p^k = -r(x^k)$
\State $ x^{k+1} =  x^k +  p^k$
\EndWhile
\end{algorithmic}
\end{algorithm}

\begin{algorithm}\caption{The deflated Newton method for nonlinear equations \cite{farrell_deflation_2020}}\label{alg. Deflated Newton}
\setstretch{1.25}
\begin{algorithmic}
\State $ x^0 =$ initial guess
\While{$||r(x^k)||_2<$ tol}
\State Solve $J_r(x^k)p^k=- r(x^k)$
\State$\displaystyle \beta = 1-\langle\mu(x^k)^{-1}{\nabla\mu(x^k)},p^k\rangle$
\State $\displaystyle x^{k+1} =  x^k + \beta^{-1} p^k$
\EndWhile
\end{algorithmic}
\end{algorithm}

\begin{theorem}[Farrell et al.~\cite{farrell_deflation_2020}]\label{Thrm: Deflated Newton}
    A step of Algorithm \ref{alg. Deflated Newton} is equivalent to a step of Algorithm \ref{alg. Newton} applied to the system $\mu(x)r(x)$.
\end{theorem}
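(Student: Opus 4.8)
The plan is to write down the Newton step for the deflated system $g(x) := \mu(x) r(x)$ explicitly and show that it reproduces the update $x^{k+1} = x^k + \beta^{-1} p^k$ of Algorithm \ref{alg. Deflated Newton}. Since $\mu$ is scalar-valued and $r$ is vector-valued, the product rule gives the Jacobian of $g$ as a rank-one update of a scaled copy of $J_r$, namely
\begin{equation*}
J_g(x) = r(x)\, \nabla\mu(x)^T + \mu(x)\, J_r(x),
\end{equation*}
in which $\nabla\mu(x)^T$ is a row vector, so $r(x)\nabla\mu(x)^T$ is an $m \times m$ rank-one matrix. The Newton step $q$ for $g$ then solves $J_g(x)\, q = -g(x) = -\mu(x)\, r(x)$.

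The key observation is that, because the only deviation of $J_g$ from the scaled Newton matrix $\mu J_r$ is a rank-one term pointing in the direction $r$, the solution $q$ must be parallel to the ordinary Newton step $p$ defined by $J_r(x)\, p = -r(x)$. I would therefore substitute the ansatz $q = \alpha p$ into the Newton equation. Using $J_r p = -r$ to rewrite the term $\mu J_r q = \alpha \mu J_r p = -\alpha \mu r$, the whole equation collapses to a scalar relation multiplying the single vector $r$:
\begin{equation*}
\alpha \left( \langle \nabla\mu, p\rangle - \mu \right) r = -\mu\, r.
\end{equation*}
Provided $r \neq 0$, the bracketed scalar must match, and solving for $\alpha$ yields $\alpha = \mu / (\mu - \langle \nabla \mu, p\rangle) = (1 - \langle \mu^{-1}\nabla\mu, p\rangle)^{-1} = \beta^{-1}$, exactly the scaling in Algorithm \ref{alg. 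Deflated Newton}.

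To make this rigorous I would first dispose of the trivial case $r(x) = 0$, where $g(x) = 0$ and both algorithms take a null step, so that cancelling the scalar factor of $r$ is justified only in the regime $r(x) \neq 0$. I would then note that, assuming $J_r(x)$ is invertible so that $p$ is well defined, and $\beta \neq 0$ so that $J_g(x)$ is invertible (the matrix determinant lemma gives $\det J_g = \beta\, \mu^m \det J_r$), the Newton step for $g$ is unique; hence exhibiting one solution $q = \beta^{-1} p$ of the Newton equation proves it coincides with the unique step, i.e.\ with the deflated update. The only place care is needed is this invertibility-and-uniqueness bookkeeping together with the rank-one manipulation; a fully systematic alternative is to invert $J_g$ directly via the Sherman--Morrison formula, which produces the same factor $\beta^{-1}$ but through a longer computation. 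I expect the ansatz route to be the cleanest and to constitute the heart of the argument.
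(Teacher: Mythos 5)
Your proof is correct, but it takes a genuinely different route from the paper's. The paper starts from the same Jacobian expansion $J_{\mu r} = \mu J_r + r\nabla\mu^T$, but then inverts it explicitly: normalizing by $\mu$ and applying the Sherman--Morrison formula to $J_r + r\nabla\eta^T$ (with $\eta = \log\mu$) yields a closed form for $J_{\mu r}^{-1}\mu$, into which $r$ is substituted to obtain $\hat p^k = \beta^{-1}p^k$. You avoid inversion altogether: you substitute the ansatz $q = \alpha p$ into the linear system $J_g q = -\mu r$, observe that the rank-one structure collapses the equation onto the single direction $r$, and solve a scalar equation for $\alpha$; uniqueness of the Newton step is then supplied by invertibility of $J_g$, which your matrix-determinant-lemma identity $\det J_g = \beta\,\mu^m \det J_r$ establishes exactly when $\beta \neq 0$ (given $J_r$ invertible and $\mu \neq 0$). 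Your version is more elementary and, usefully, makes explicit the degeneracies the paper passes over silently: the paper's Sherman--Morrison step is valid only when $1 + \nabla\eta^T J_r^{-1}r = \beta \neq 0$, and your determinant identity pinpoints this as precisely the condition for the deflated Newton system to be solvable, while your separate treatment of $r(x) = 0$ (where $p^k = 0$, $\beta = 1$, and both iterations stall) justifies cancelling the factor of $r$. What the paper's longer route buys in exchange is the explicit inverse formula itself, which prefigures the derivation of the ``bad'' deflated Gauss--Newton step later in the paper: there the analogous rectangular computation genuinely requires a Sherman--Morrison-type pseudoinverse update (Meyer's theorem), and no one-dimensional ansatz parallel to $p^k$ could succeed, since the deflated step acquires a second component in the direction $(J_r^TJ_r)^{-1}\nabla\eta$.
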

\begin{proof}
    A short proof can be found in Appendix A.
\end{proof}

Notice that in Algorithm  \ref{alg. Deflated Newton} the value $ \beta = 1-\langle\mu^{-1}{\nabla\mu},p^k\rangle$ is a scalar, and so the only effect that deflation has on the Newton method is that each step is just a scalar multiple of the undeflated Newton step $p^k$. This means that the effect of deflation can be fully described by the value of $\beta$, and in fact the behaviour can be split up into three distinct cases, which are displayed graphically in Figure \ref{fig: Simple Beta Contour} for Himmelblau's function (more specifically, the residual of),
\begin{equation}\label{eqn:Himmelblau}
    r(x,y) = (x^{2}+y-11, x+y^{2}-7)^T.
\end{equation}
\begin{enumerate}
    \item When $\beta<0$ (alternatively $\langle\mu^{-1}{\nabla\mu},p^k\rangle > 1$), $p^k$ is $\mu$-ascending and the deflation update reverses the direction of the step.
    \item When $0<\beta<1$  (or $0< \langle\mu^{-1}{\nabla\mu},p^k\rangle <1$), $p^k$ is moderately $\mu$-ascending and deflation increases the step length. This can be interpreted as destabilising potential convergence to a deflated point.
    \item Finally, when $\beta > 1$ ($\langle\mu^{-1}{\nabla\mu}, p^k\rangle<0$), $p^k$ is $\mu$-descending and deflation decreases the step length. If $\mu$ is relatively flat away from deflated points then this decrease is minimal there.
\end{enumerate}
\begin{figure}[!ht]
    \centering
    \includegraphics[width =0.8\textwidth]{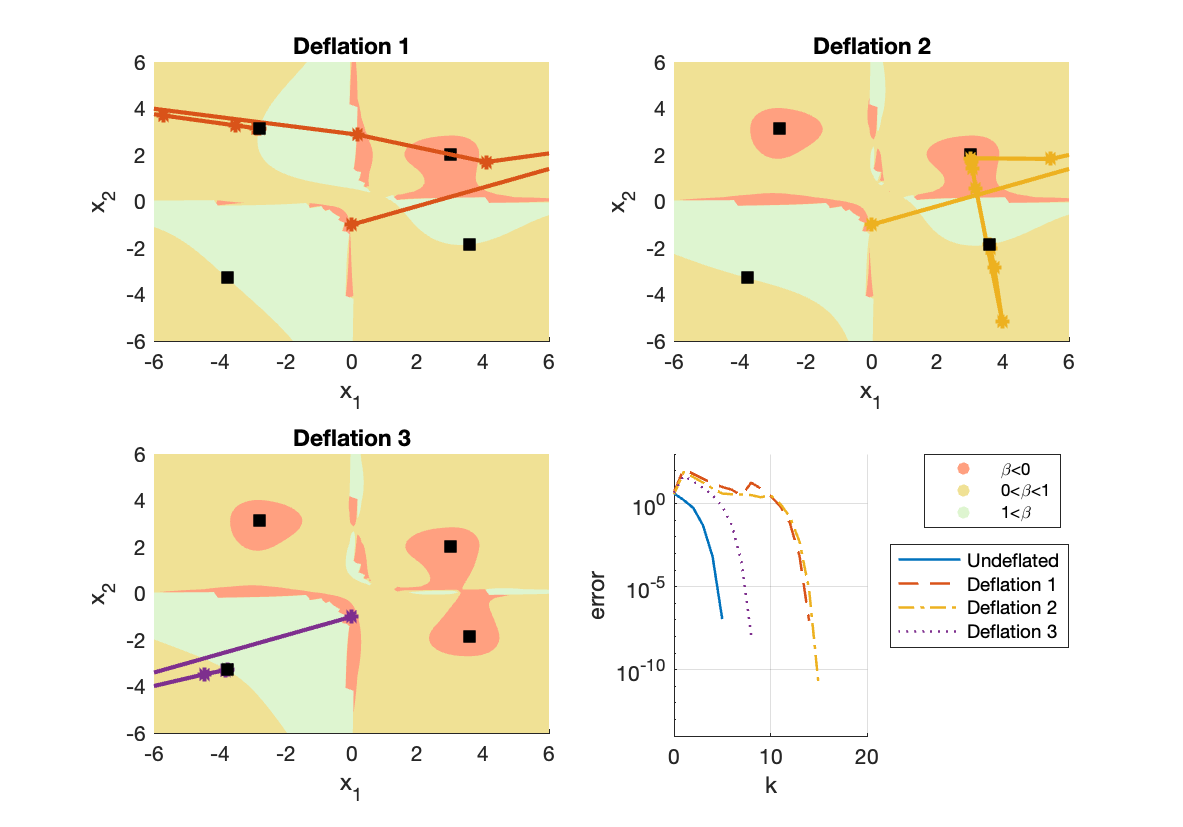}
    \caption{Multiple solutions to Himmelblau's function \cite{himmelblau_applied_1972}, defined above, computed using the deflated Newton method. Bottom Right: Shows the convergence rate of the method to all 4 roots. The other plots show the contours of $\beta$ after 1, 2 and 3 deflations respectively. Notice that unfound solutions lie on the contour $\beta = 1$ and deflated solutions lie in a region where $\beta < 0$.}
    \label{fig: Simple Beta Contour}
\end{figure}

\subsection{Deflation operators}\label{Section: Deflation Operators}
Early use of deflation was to find multiple roots of polynomial equations \cite{wilkinson_rounding_1963}. Given a polynomial, $p_n(x)$, and a root $y$ that has been found by some iterative method, then the quotient polynomial 
\begin{equation}
    q_n(x) = \frac{p_n(x)}{y-x},
\end{equation}
is formed and a new root can be found by applying the same iterative rootfinding method to $q_n(x)$, a process which can be repeated. Wilkinson's careful analysis revealed numerical instabilities to this approach that depend on the order in which the roots are found, with ascending order in magnitude being the most stable order.

Brown and Gearhart \cite{brown_deflation_1971} introduced the deflation operator
\begin{equation}
    \mu(x;y) = \frac{1}{|| y-x||} 
\end{equation}
for some vector norm, along with other matrix-based operators. In principle, these deflation operators can be applied to any rootfinding problem for $r : \mathbb{R}^m \to \mathbb{R}^m$. Unfortunately, they observed similar instabilities in this approach.

Farrell et al.~modified the deflation operator by adding a shift, $\sigma$ \cite{farrell_deflation_2020}. This results in the deflated system resembling the undeflated system when far enough away from any deflated points. The value of $\sigma$, is of course a parameter that needs to be defined. In this paper we will set $\sigma = 1$. However, we note that other choices are valid and can in theory lead to better results \cite{farrell_deflation_2020}. There is also the choice of norm,  for simplicity we will use the  2-norm, but it is possible to use different norms (see Subsection \ref{subsec FE}). It is also often advantageous to raise the norm to some power, $\theta$, intended to overcome a higher multiplicity in the roots. In practice, $\theta = 2$ is effective. This all leads to the deflation operator that we will use in this paper:
\begin{equation}\label{eqn. Deflation Operator}
    \mu( x;y) = \frac{1}{|| y-x||^\theta_2} + \sigma,
\end{equation} 
which satisfies both the root deflation Definition \eqref{def. Deflation Operator} and the optimization deflation Definition \eqref{def. Optimization Deflation Operator} defined in Section \ref{sec. NLLS}.

\subsubsection{Multiple deflations}
After a root, say $y_1$ has been found to a system of equations $r(x)$ we can deflate out this found root by defining the deflated system $\mu( x;y_1)r( x)$ and running the same rootfinding method on this new problem. This can be iterated to obtain $\mu( x;y_n)\cdots \mu( x;y_1)r( x)$ leading to the `multi-shift' deflation operator:
\begin{equation}\label{eqn. Multi Shift Operator}
    \mu( x; y_1,\ldots, y_n) = \mu( x;y_1)\dots \mu( x;y_n) = \left(\sigma + \frac{1}{|| x -y_1||_2^\theta}\right) \dots \left(\sigma +\frac{1}{|| x -y_n||_2^\theta}\right)
\end{equation}
An alternative to consider is a `single shift' deflation operator:
\begin{equation}
    \mu( x; y_1,\ldots, y_n) = \sigma + \frac{1}{|| x -y_1||_2^\theta} \cdots \frac{1}{|| x -y_n||_2^\theta}.
\end{equation}

Note that sometimes, $y_i = y_j$ for distinct $i$ and $j$. This can happen if the root of $r$ at $y_i$ is of multiplicity too great to be cancelled out by $\mu$ \cite{brown_deflation_1971}.

\subsubsection{A novel deflation operator}
 The choice of $\theta$ was motivated by the expected multiplicity of the roots to be found. However, in many cases, this information will not be known, and a somewhat arbitrary choice needs to be made. We now provide an alternative operator that aims to incorporate all possible values of $\theta$. Consider the expansion,
\begin{equation}
    \exp\left( \frac{1}{|| y-x||_2}\right) = 1 + \frac{1}{|| y-x||_2} + \frac1 2 \frac{1}{|| y-x||_2^2} +\cdots.
\end{equation}
 This expression incorporates all powers $\theta$, so we  define a new  deflation operator $\mu( x; y) = \exp( \frac{1}{|| y-x||_2})$. This deflation operator has not been investigated before. 

 This deflation operator is natural because deflation only involves $\mu^{-1}\nabla\mu$, which is relatively simple in this case:
\begin{equation}
\mu(x)^{-1}\nabla\mu(x) = \frac{y-x}{||y-x||_2^3}.
\end{equation}\label{exp deflation operator}

\section{Deflation techniques for nonlinear least squares problems}\label{sec. NLLS}
A nonlinear least squares problem is that of minimising the function $f : \mathbb{R}^\ell \to \mathbb{R}$ given by
\begin{equation}\label{NLLS equation}
    f( x) = \frac12||r( x)||^2_2,
\end{equation}
where $r:\mathbb{R}^\ell \to \mathbb{R}^m$. Such problems arise naturally when fitting $m$ data-points $r_i$ with $\ell$ parameters $x_j$ in the presence of Gaussian noise \cite{nocedal_numerical_2006}.

In this section we discuss three different deflated methods for finding multiple minimizers of a given $f$. The first method we will discuss is the deflated Newton method given here as Algorithm \ref{alg. Deflated NLLS Newton}.
\begin{algorithm}\caption{The deflated Newton method for optimization}\label{alg. Deflated NLLS Newton}
\setstretch{1.25}
\begin{algorithmic}
\State $ x^0 =$ initial guess
\While{$||\nabla f(x^k)||_2<$ tol}
\State Solve $H_f(x^k)p^k=-\nabla f(x^k)$
\State$\displaystyle \beta = 1-\langle\mu(x^k)^{-1}{\nabla\mu(x^k)},p^k\rangle$
\State $\displaystyle x^{k+1} =  x^k + \beta^{-1} p^k$
\EndWhile
\end{algorithmic}
\end{algorithm} 

One of the main downsides to the Newton method for optimization, is that it requires calculating second derivatives of the objective function, which can be a relatively expensive operation, or indeed simply not available. Another disadvantage compared to Gauss--Newton methods is that the method can converge to any given stationary points of $f(x)$, not just to the local minima, so often computation is wasted on these unwanted stationary points. To remedy these two disadvantages, we introduce the ``good'' and the ``bad'' Deflated Gauss--Newton methods, presented here as Algorithms \ref{alg. Good Deflated Gauss--Newton} and \ref{alg. Bad Deflated Gauss--Newton}.

\begin{definition}\label{def:levelset}
For a function $g : \mathbb{R}^d \to \mathbb{R} \cup \{+\infty \}$ and $c \in \mathbb{R}$, the strict $c$-superlevel set of $g$ is the set
\begin{equation*}
    L_c(g) := \left\{ x \, | \, g(x) > c \right\}.
\end{equation*}
\end{definition}
For the rest of this paper we will just use the term level set when talking about a strict superlevel set.
In the rest of this section we will discuss the motivation and convergence analysis for the three methods.  However, we first need a new definition of deflation operator for nonlinear least square problems. It is helpful to define the function $\eta(x) = \log(\mu(x))$.
\begin{definition}\label{def. Optimization Deflation Operator}
For an iterative minimization method $x^{k+1} = x^k + p(x^k)$ and a set of deflated points $y_1,\dots,y_n$, we say that $\mu( x; y_1,\ldots, y_n):\mathbb R^\ell \setminus \{y_1,\ldots,y_n\} \rightarrow \mathbb{R}_{>0}$ is a deflation operator if it satisfies:
\begin{enumerate}
    \item The function $\nabla \eta(x) = \mu(x)^{-1} \nabla \mu(x)$ exists and is continuous for all $x \in \mathbb{R}^\ell \setminus \{y_1,\ldots,y_n\}$.
    \item If $p(y_i) = 0$ then $\liminf_{ x \rightarrow y_i} \langle  \nabla\eta(x), p(x)\rangle >1$.
    % \item $\displaystyle\lim_{ x \rightarrow y_i} \mu(x) = +\infty$
    \item There exists a number $c^* > 0$ such that for any $c > c^*$ the level set $L_{c}(\mu)$ consists of $n$ nonempty convex sets, each containing one of $y_1,\ldots,y_n$.
\end{enumerate}
\end{definition}

Note that this definition differs from  Definition \ref{def. Deflation Operator} given earlier for systems of equations. In this definition neither $f$ nor $r$ is referred to directly, only the optimization step $p$. 

This definition also does not explicitly exclude the possibility that deflation can create spurious solutions. We believe that this property depends on how the deflation operator is applied (we define two ways below). In practice we don't observe spurious solutions, but a theoretical understanding is lacking at this point in time.

\begin{algorithm}\caption{The ``good" deflated Gauss--Newton method}\label{alg. Good Deflated Gauss--Newton}
\setstretch{1.25}
\begin{algorithmic}
\State $ x^0 =$ initial guess
\State $\epsilon \in [0,1]$ is as described in \eqref{epsilon}
\While{$||p^k||_2<$ tol}
\State $p^k = {\operatorname{argmin}_p} \| r(x^k) + J_r(x^k) \, p \|_2$
\If {$\langle p^k, \nabla \eta(x^k)\rangle >\epsilon$}
\State$\displaystyle \beta = 1- \langle \nabla\eta(x^k), p^k \rangle$
\State $\displaystyle  x^k =  x^{k-1} + \beta^{-1}p^k$
\Else
\State $\displaystyle  x^{k+1} = x^k + \alpha \, p^k$, where $\alpha$ is determined by line search on $f$.
\EndIf
\EndWhile
\end{algorithmic}
\end{algorithm}

\begin{algorithm}\caption{The ``bad" deflated Gauss--Newton method}\label{alg. Bad Deflated Gauss--Newton}
\setstretch{1.25}
\begin{algorithmic}
\State $ x^0 =$ initial guess
\State $\epsilon \in [0,1]$ is as described in \eqref{epsilon}
\While{$||p^k||_2<$ tol}
\State $p^k = \operatorname{argmin}_p \| r(x^k) + J_r(x^k) \, p \|_2$
\If{$\langle p^k, \nabla \eta(x^k)\rangle >\epsilon$}
\State $\hat{p}^k = \beta_1 \, p^k + \beta_2\, \left(J_r(x^k)^T J_r(x^k)\right)^{-1} {\nabla\eta(x^k)} $ where $\beta_1$ and $\beta_2$ are defined in equation \eqref{eqn:beta1beta2}.
\State $x^{k+1} = x^k + \hat{p}^k$
\Else
\State $x^{k+1} = x^k + \alpha \, p^k$, where $\alpha$ is determined by line search on $f$.
\EndIf
\EndWhile
\end{algorithmic}
\end{algorithm}

\subsection{Newton's method for optimization}
Algorithm \ref{alg. Deflated NLLS Newton} is in fact still a rootfinding method and is  the same as Algorithm \ref{alg. Deflated Newton} applied to $\nabla f(x)$ the gradient of $f(x)$, thus finding the stationary points of $f$. The algorithm requires the calculation of the gradient, $\nabla f \in \mathbb R^{\ell}$, and Hessian, $H_f \in\mathbb R^{\ell\times\ell}$, of $f$. The gradient is given by the formula 
\begin{equation}\label{eqn:gradf}
    \nabla f(x) = J_r( x)^Tr( x)
\end{equation}
note that we will assume for the rest of this paper $J_r(x^k)$ is full rank for all $k$. The Hessian by 
\begin{equation}
    H_f = J_r( x)^TJ_r( x) + \sum_{i=1}^m r_i( x) H_{r_i}( x)
\end{equation}
where $r_i( x)$ represents the $i$th component of $r$ and $H_{r_i}( x)$ represents the Hessian of $r_i$.

Since the Newton algorithm for optimization can equivalently be thought of as the rootfinding Newton method (Algorithm \ref{alg. Newton}) applied to $\nabla f$, it will converge to stationary points of $f$ given a sufficiently good initial guess. 

The work of Papadopoulos et al.~in  \cite{papadopoulos_computing_2021} has shown that this deflated method can be effective at solving large-scale topology optimization problems.

\subsubsection{Convergence}
The Newton method for optimization is not guaranteed to be a globally convergent method, this is due to the fact that $H_f$ may not be positive definite far from a minimum. There are results, however, about the local convergence properties in the neighbourhood around a minimum.

% \begin{definition}[\cite{nocedal_numerical_2006}]
%     A point $x^*$ is a strict local minimizer if there is a neighbourhood $N$ of $x^*$ such that $f (x^*) < f (x)$ for all $x \in N$ with $x \neq x^*$.
% \end{definition}

\begin{theorem}[Theorem 2.4 in \cite{nocedal_numerical_2006}]
    Let $f(x)$ be twice differentiable, and $H_f(x)$ Lipshitz continuous in a neighbourhood around a point, say $x^*$. Let us further assume  $\nabla f(x^*) = 0$, and $H_f(x^*)$ is positive definite (that is $x^*$ is a strict local minimiser i.e. the space is locally convex). Then if the starting point, $x^0$, is sufficiently close to $x^*$  the method will converge quadratically to $x^*$, and the sequence of norms of the gradients will also converge quadratically to $0$.
\end{theorem}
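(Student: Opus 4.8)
The plan is to follow the classical local convergence argument for Newton's method, tracking the error $e^k := x^k - x^*$. First I would exploit the hypotheses on $H_f$: since $H_f(x^*)$ is positive definite and $H_f$ is continuous, there is a closed ball $\mathcal{B}$ centred at $x^*$ on which $H_f(x)$ remains invertible with a uniform bound $\|H_f(x)^{-1}\| \le C$. This is exactly what guarantees that the Newton step $p^k = -H_f(x^k)^{-1}\nabla f(x^k)$ is well defined for every iterate that stays inside $\mathcal{B}$.

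The heart of the argument is a Taylor expansion of the gradient with integral remainder. Starting from the Newton update and using $\nabla f(x^*)=0$ together with $\nabla f(x^k) = \int_0^1 H_f(x^*+te^k)\,e^k\,dt$, I would rewrite the error as
\begin{equation*}
  e^{k+1} = H_f(x^k)^{-1}\int_0^1\left[H_f(x^k)-H_f(x^*+te^k)\right]e^k\,dt.
\end{equation*}
The Lipschitz hypothesis bounds the integrand in norm by $L(1-t)\|e^k\|^2$, and integrating in $t$ contributes a factor $\tfrac12$, so that together with the bound on $\|H_f^{-1}\|$ one obtains $\|e^{k+1}\|\le \tfrac{LC}{2}\|e^k\|^2$. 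This is precisely the quadratic contraction we want.

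To make ``sufficiently close'' precise and to close the argument I would run an induction: choosing $x^0\in\mathcal{B}$ with $\|e^0\|$ small enough that $\tfrac{LC}{2}\|e^0\|<1$, the quadratic estimate forces $\|e^{k+1}\|<\|e^k\|$, so every iterate remains in $\mathcal{B}$ and $\|e^k\|\to 0$ at quadratic rate. For the gradient statement I would apply the same Taylor-with-remainder device, now expanding $\nabla f(x^{k+1})$ about $x^k$ and using the defining relation $\nabla f(x^k)+H_f(x^k)p^k=0$ to obtain $\|\nabla f(x^{k+1})\|\le \tfrac{L}{2}\|p^k\|^2$; since $p^k = -H_f(x^k)^{-1}\nabla f(x^k)$ gives $\|p^k\|\le C\|\nabla f(x^k)\|$, this immediately yields $\|\nabla f(x^{k+1})\|\le \tfrac{LC^2}{2}\|\nabla f(x^k)\|^2$, the desired quadratic decay of the gradient norms.

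The main obstacle, and the only part requiring genuine care, is the bookkeeping that keeps the iterates inside $\mathcal{B}$: the quadratic bound is valid only where $H_f^{-1}$ is controlled, so the induction hypothesis and the smallness condition on $\|e^0\|$ must be calibrated together to ensure the sequence never escapes the ball. Everything else is routine Taylor estimation combined with the Lipschitz and positive-definiteness hypotheses. As this is a standard textbook result, I would ultimately just cite Theorem 2.4 of \cite{nocedal_numerical_2006} rather than reproduce the full estimates.
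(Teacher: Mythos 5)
Your sketch is correct and is exactly the classical argument (error recursion via Taylor expansion with integral remainder, uniform bound on $H_f^{-1}$ near $x^*$, induction to confine iterates to the ball, and the analogous estimate $\|\nabla f(x^{k+1})\|\le \tfrac{LC^2}{2}\|\nabla f(x^k)\|^2$ for the gradients), which is precisely the proof given in the cited reference. The paper itself supplies no proof for this statement --- it is imported verbatim with a citation to Nocedal and Wright --- so your concluding decision to cite rather than reproduce the estimates matches the paper's treatment exactly.
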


The Deflated Newton method will not converge to previously deflated points, given that large enough exponent, $\theta$, is chosen, or after a sufficient number of deflations \cite{brown_deflation_1971}. 

\FloatBarrier

\subsection{Gauss--Newton methods}
First we will look at the undeflated method. This is an algorithm specifically designed for solving the nonlinear least squares problem, it is given here as Algorithm \ref{alg. Gauss--Newton}.  Importantly it does not require the calculation of the Hessian of $f$, nor does it converge to all stationary points of  $f$, just the minima (or maxima) that are of interest.  

\begin{algorithm}[htbp]\caption{The Gauss--Newton method}\label{alg. Gauss--Newton}
\setstretch{1.25}
\begin{algorithmic}
\State $ x^0 =$ initial guess
\While{$\|p^k\|_2<$ tol}
\State $p^k = {\operatorname{argmin}_p} \| r(x^k) + J_r(x^k) \, p \|_2$
\State $ x^{k+1} =  x^{k} +\alpha \, p^k$, where $\alpha$ is determined by line search on $f$.
\EndWhile
\end{algorithmic}
\end{algorithm}
Interestingly in most textbooks the calculation of $p^k$ in the algorithm is given by solving:
\begin{equation}
    (J_r(x^k)^TJ_r(x^k)) p^k = - J_r(x^k)^Tr(x^k).
\end{equation}
This might due to the fact that the method is frequently described as a truncated Newton method; since $J_r^TJ_r$ can be thought of as the approximation to $H_{f}$ given by neglecting the second order terms, and $J_rr = \nabla f$. Note that this observation motivates one approach to deflating the method, covered in Section \ref{sec. Good GN}. A more formal motivation of the method however comes from  looking at the first two terms of the Taylor series expansion of $r$ at point $ x^k$: 
\begin{equation}
    r( x) \approx r( x^k) + J_r( x^k)( x -  x^k).
\end{equation}
We then try to find the $ -p= ( x^{k} -  x^{k+1}) $ that minimizes this surrogate linear least squares problem:
\begin{equation}
    {\operatorname{argmin}_p} || r( x^k) + J_r( x^k) p||_2^2.
\end{equation}
Clearly in the square case the minimum occurs when $ p  =- J_r( x^k)^{-1}r( x^k)$ (assuming that $J_r^{-1}$ exists). This result generalizes in the rectangular case, such that the minimum can be given by using the pseudoinverse: $ p = -J_r( x^k)^+r( x^k) $, although of course it is in general best to avoid explicitly calculating the pseudoinverse.  Note  that if $J_r$ is  left invertible --- which is always the case when $J_r$ has full rank --- we can rewrite the pseudoinverse as $J_r^+ = (J_r^TJ_r)^{-1} J_r^T$. Thus recovering the formula above.

Note that the use of the pseudoinverse shows that the method can  be thought of as a generalization of the rootfinding  Newton method applied to $r$. This provides the motivation for the other approach to applying deflation to the Gauss--Newton method, covered in Section \ref{Sec. Bad GN}.

\subsection{Convergence of the undeflated Gauss--Newton method}\label{sec. undeflated GN convergence}
Like the Newton method it is known that the Gauss--Newton method is locally convergent, under certain assumptions, but not necessarily globally convergent.

\begin{theorem}[Theorem 10.1 in \cite{nocedal_numerical_2006}]\label{thm: undeflatedGN}
Assume that $r(x)$ is Lipschitz continuously differentiable  and that $J_r(x)$  is uniformly full rank\footnote{i.e.~there exists a constant $\gamma > 0$ such that $\|J_r(x) v\| \geq \gamma \|v\|$ for all $v \in \mathbb{R}^\ell$ and all $x \in \mathbb{R}^m$.} for all $x$ in a neighbourhood of the bounded level set $\{x | f(x) \leq f(x^0) \}$. If $x^k$ are iterates generated by the Gauss--Newton method using a line search that satisfies the Wolfe conditions then 
\begin{equation}
    \lim_{k\to\infty} J_r(x^k)^Tr(x^k) = 0.
\end{equation}
\end{theorem}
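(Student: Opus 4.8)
The plan is to invoke Zoutendijk's theorem (Theorem 3.2 in \cite{nocedal_numerical_2006}), the standard device for establishing global convergence of line-search methods under the Wolfe conditions. That theorem asserts that if $f$ is bounded below and continuously differentiable with Lipschitz continuous gradient on an open neighbourhood of the level set $\{x \mid f(x) \le f(x^0)\}$, and each step uses a descent direction $p^k$ with step length satisfying the Wolfe conditions, then
\begin{equation*}
    \sum_{k=0}^\infty \cos^2\theta_k \, \|\nabla f(x^k)\|_2^2 < \infty,
\end{equation*}
where $\theta_k$ is the angle between $p^k$ and the steepest-descent direction $-\nabla f(x^k)$. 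First I would verify the hypotheses: $f = \tfrac12\|r\|_2^2 \ge 0$ is bounded below, the level set is bounded by assumption, and since $r$ is Lipschitz continuously differentiable the gradient $\nabla f = J_r^T r$ is Lipschitz continuous on a neighbourhood of that bounded set; the Wolfe line search guarantees the iterates never leave $\{x \mid f(x) \le f(x^0)\}$, so all bounds may be taken uniformly over this set.

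The crux is to show that $\cos\theta_k$ is bounded away from zero. Writing $H = J_r(x^k)^T J_r(x^k)$, which is symmetric positive definite by the full rank assumption, the Gauss--Newton step solves $H p^k = -J_r^T r = -\nabla f(x^k)$, so $p^k = -H^{-1}\nabla f$. Then
\begin{equation*}
    \cos\theta_k = \frac{-\nabla f^T p^k}{\|\nabla f\|_2\,\|p^k\|_2} = \frac{\nabla f^T H^{-1}\nabla f}{\|\nabla f\|_2\,\|H^{-1}\nabla f\|_2} \ge \frac{\lambda_{\min}(H)}{\lambda_{\max}(H)},
\end{equation*}
the last inequality following from $\nabla f^T H^{-1}\nabla f \ge \lambda_{\max}(H)^{-1}\|\nabla f\|_2^2$ and $\|H^{-1}\nabla f\|_2 \le \lambda_{\min}(H)^{-1}\|\nabla f\|_2$. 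The uniform full rank hypothesis $\|J_r v\|_2 \ge \gamma\|v\|_2$ gives $\lambda_{\min}(H) \ge \gamma^2$, while Lipschitz differentiability of $r$ over the bounded set bounds $\|J_r\|_2$ above by some constant $\beta$, hence $\lambda_{\max}(H) \le \beta^2$. Therefore $\cos\theta_k \ge \gamma^2/\beta^2 =: \delta > 0$ for every $k$.

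With this uniform lower bound in hand the conclusion is immediate: Zoutendijk's sum is finite, yet each summand dominates $\delta^2\|\nabla f(x^k)\|_2^2$, forcing $\|\nabla f(x^k)\|_2 = \|J_r(x^k)^T r(x^k)\|_2 \to 0$. I expect the main obstacle to lie in the bookkeeping rather than the core estimate: one must carefully justify the uniform upper bound $\beta$ on $\|J_r\|_2$ (which requires taking the neighbourhood of the level set on which full rank holds to be bounded, so that continuity of $J_r$ yields boundedness), and confirm that the Wolfe conditions genuinely apply here — in particular that $p^k$ is a descent direction, which holds since $\nabla f^T p^k = -\nabla f^T H^{-1}\nabla f < 0$ whenever $\nabla f \neq 0$.
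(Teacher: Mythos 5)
Your proof is correct and takes essentially the same route as the paper, which does not reprove this result but cites it as Theorem 10.1 of \cite{nocedal_numerical_2006}: that textbook proof is exactly your combination of Zoutendijk's theorem with a uniform lower bound $\cos\theta_k \ge \gamma^2/\beta^2$ derived from the uniform full-rank hypothesis and the boundedness of $\|J_r\|_2$ on a neighbourhood of the bounded level set. The only cosmetic difference is that you obtain the angle bound through the eigenvalues of $H = J_r^T J_r$, whereas the textbook computes $-\nabla f^T p^k = \|J_r p^k\|_2^2 \ge \gamma^2 \|p^k\|_2^2$ and $\|\nabla f\|_2 \le \beta^2 \|p^k\|_2$ directly; the two calculations are equivalent.
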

There are, however, no guarantees if $J_r$ is not uniformly fully rank \cite{nocedal_numerical_2006}.

% The rate of convergence of the Gauss--Newton approaches the quadratic convergence of the Newton method, when sufficiently close to a sufficiently small local minimum. However in general the asymptotic convergence rate can only be assumed to be linear. The rate of convergence can also be considerably slower for large-residual problems. It does however provide one of the fast theoretical rates of local convergence of the class of `first order methods', compared to others like the secant or gradient descent methods.

\subsection{The ``good" deflated Gauss--Newton method}\label{sec. Good GN}
Our first approach to deflating the Gauss--Newton method is motivated by the fact that it can be thought of as a truncated Newton method, and thus defined by the step:
\begin{equation}
     x^{k+1} =  x^{k} - (J_r^TJ_r)^{-1} J_r^Tr.
\end{equation}

To derive a deflated method, we can apply similar logic that derives Gauss--Newton from optimization Newton by dropping Hessian terms. First we need to calculate the derivative of the deflated gradient:
\begin{equation}
    \nabla(\mu J_r^Tr) = J_r^Tr{\nabla\mu}^T + \mu J_r^TJ_r + \mu \sum_{i=1}^mr_i H_{r_i}.
\end{equation}
Substituting this into the formula for the step of the optimization Newton algorithm
\begin{equation}
    \left(\frac{1}{\mu}J_r^Tr{\nabla\mu}^T + J_r^TJ_r  + \sum_{i=1}^mr_i H_{r_i}\right)^{-1} J_r^Tr.
\end{equation}
Since the  Gauss--Newton method is given by neglecting the second order terms in Newton's method, we do  the same here with the deflated components to get the deflated step:
\begin{equation}
    \left(\frac{1}{\mu}J_r^Tr{\nabla\mu}^T +  J_r^TJ_r\right)^{-1} J_r^Tr.
\end{equation}
The same logic as in the proof of Theorem \ref{Thrm: Deflated Newton} gives the same simplification:
\begin{equation}
    x^{k+1} = x^k + \beta^{-1} p^k
\end{equation}
where $p^k$ is the Gauss--Newton step and $\beta = 1- \langle\nabla\eta,p^k\rangle$. 

\subsubsection{Convergence and non-convergence}

In order to talk about the convergence and indeed  non-convergence of this method it is again useful to look at the three different cases for $\beta$, and what the effect of deflation is in each of these cases. In fact we propose adapting the method depending on which case the current iterate satisfies. 

\begin{itemize}
    \item When $1 <\beta$ (alternatively $\langle \nabla \eta, p\rangle<0$), as stated before, the step $p$ is already in a descent direction for $\mu$, and the effect of deflation reduces the step size. In this case we suggest using an undeflated step with a line search on the undeflated objective function, thus keeping the convergence guarantees of the undeflated method covered in Section \ref{sec. undeflated GN convergence}. This approach could be thought of as  incorporating the method of `purification' of the found minima; a  technique introduced by Wilkinson in \cite{wilkinson_rounding_1963} to more accurately calculate all roots.  
    \item When $\beta <1$ ($0 < \langle \nabla \eta, p\rangle$), the step is in an ascent direction for $\mu$. However, when $\beta<0$ ($1<\langle \nabla \eta, p\rangle$) the effect of deflation is to reverse the direction of the step and so is a descent direction for $\mu$.
\end{itemize}

We know that all unfound solutions lie on the boundary $\beta=1$ ($\langle \nabla \eta, p\rangle=0$), since on this contour $p = 0$, and $\nabla \eta$ is finite by Definition \ref{def. Optimization Deflation Operator} part (1). Ideally, a line search would be applied within a neighbourhood around each minimum to guarantee convergence. To achieve this we suggest relaxing the condition on $\beta$  slightly so that the undeflated Gauss--Newton step with line search is used when 
\begin{equation}
    \beta >1-\epsilon,\quad \text{equivalently}\quad \langle \nabla \eta, p\rangle<\epsilon \label{epsilon}
\end{equation}
for small $\epsilon \in [0,1]$. The effect of different values of  $\epsilon$ can be seen in Figure \ref{fig: epsilon contours} where the green region, where undeflated line search Gauss--Newton is used, is significantly expanded for a positive value of $\epsilon$. In our experiments we don't observe much impact of a small $\varepsilon$ against $\varepsilon = 0$ on the rate of convergence of the ``good'' deflated Gauss--Newton algorithm.

\begin{figure}[!ht]
    \centering
    \includegraphics[width =0.8\textwidth]{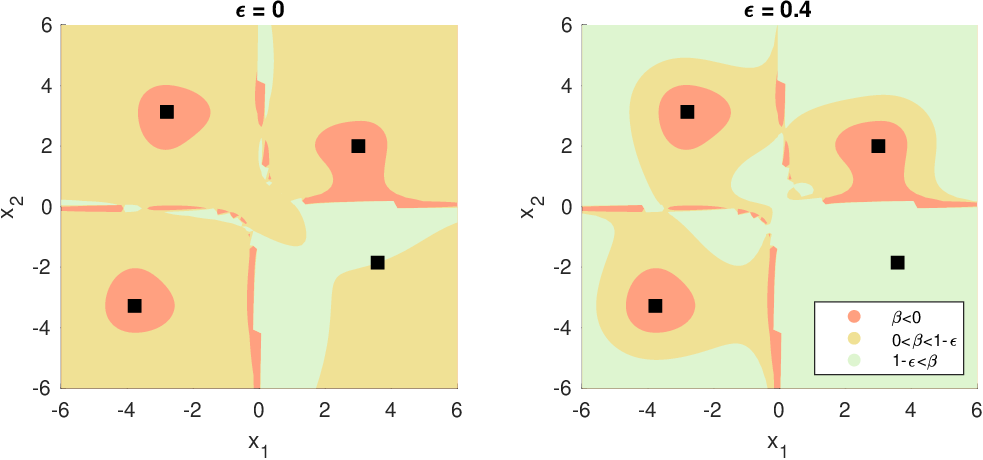}
    \caption{The effect of setting $\epsilon > 0$ on the pertinent contours of $\beta$, for the Himmelblau function \eqref{eqn:Himmelblau} after three deflations. Notice that fourth solution lies on the contour $\beta = 1$.}
    \label{fig: epsilon contours}
\end{figure}

Thus the ``good" deflated Gauss--Newton method, Algorithm \ref{alg. Good Deflated Gauss--Newton}, uses the undeflated Gauss--Newton step with a line search on $f$ when $\langle \nabla \eta, p\rangle \leq \epsilon$, and uses the deflated step, utilising the scalar update, with no line search applied when $\langle \nabla \eta, p\rangle>\epsilon$. 

It is important to show that these deflated methods do not converge to any point that has been deflated.

\begin{theorem}\label{Thrm: Good GN NC}
        Let $y_1,\dots,y_n \in \mathbb R^\ell$ be a set of deflated points and $\mu(x;y_1,\dots,y_n)$ be a deflation operator as in Definition  \ref{def. Optimization Deflation Operator}. Let $\{x^k\}_{k=0}^\infty$ be the iterates of the ``good'' deflated Gauss--Newton method (Algorithm \ref{alg. Good Deflated Gauss--Newton}). Assume that $r$ is Lipschitz continuously differentiable, $J_r$ is uniformly full rank in a neighbourhood of $\{x^k\}_{k=0}^\infty$ and assume all steps are well-defined. Then $x^k$ will not converge to any of the deflated points $y_1,\dots,y_n$.
\end{theorem}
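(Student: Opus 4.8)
The plan is to argue by contradiction: assume $x^k \to y_i$ for one of the deflated points $y_i$, and derive a contradiction by showing that the scalar $\mu(x^k)$ (equivalently $\eta(x^k) = \log\mu(x^k)$) would simultaneously have to diverge to $+\infty$ and be eventually strictly decreasing. First I would record two preliminary facts. Each deflated point is a previously computed minimizer, hence a stationary point of $f$, so $\nabla f(y_i) = J_r(y_i)^T r(y_i) = 0$; since $J_r(y_i)$ is full rank the Gauss--Newton step $p = -(J_r^T J_r)^{-1} J_r^T r$ vanishes at $y_i$, and therefore hypothesis (2) of Definition \ref{def. Optimization Deflation Operator} applies, yielding a punctured neighbourhood $U \setminus \{y_i\}$ and a constant $c_0 > 1$ with $\langle \nabla\eta(x), p(x)\rangle \geq c_0$ there. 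Shrinking $U$ if necessary, hypothesis (3) puts $U$ inside the convex superlevel sets around $y_i$, so that $\mu > c^*$ on $U$. Because $\epsilon \in [0,1] < c_0$, for every $x^k \in U$ the test $\langle p^k, \nabla\eta(x^k)\rangle > \epsilon$ holds and the algorithm must take the deflated branch $x^{k+1} = x^k + \beta^{-1} p^k$ with $\beta = 1 - \langle\nabla\eta(x^k), p^k\rangle \leq 1 - c_0 < 0$. The second fact is that $\mu(x^k)\to\infty$: for each $c>c^*$ the set $L_c(\mu)$ is open and contains $y_i$, so $x^k\to y_i$ forces $\mu(x^k)>c$ for all large $k$, and letting $c\to\infty$ gives $\eta(x^k)\to\infty$.

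The computation at the heart of the argument is a single identity. Using $\langle\nabla\eta(x^k), p^k\rangle = 1-\beta$ along the deflated step,
\begin{equation*}
\langle\nabla\eta(x^k),\, x^{k+1}-x^k\rangle = \beta^{-1}\langle\nabla\eta(x^k), p^k\rangle = \beta^{-1}(1-\beta) = \beta^{-1}-1 < 0,
\end{equation*}
since $\beta<0$. Thus the deflated step is a first-order descent direction for $\eta$, hence for $\mu$.

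The main obstacle is that this is only an infinitesimal statement. Near $y_i$ the step length $\|x^{k+1}-x^k\| = |\beta^{-1}|\,\|p^k\|$ is of the same order as $\|x^k - y_i\|$ (because $p^k \approx -(J_r^T J_r)^{-1} H_f(y_i)(x^k - y_i)$), while the curvature of $\eta$ blows up like $\|x^k - y_i\|^{-2}$, so a naive Taylor estimate of $\eta(x^{k+1})$ cannot conclude an actual decrease. This is exactly where hypothesis (3) does the real work, and the plan is to upgrade the first-order statement to a genuine decrease via a supporting-hyperplane argument. For $x^k\in U$ the closed superlevel set $C := \{x : \mu(x)\geq\mu(x^k)\}$ is convex, being the intersection of the convex sets $L_c(\mu)$ over $c^*<c<\mu(x^k)$; moreover $x^k$ lies on its boundary, and since $\mu$ is $C^1$ near $x^k$ with $\nabla\mu(x^k)\neq 0$ (as $\langle\nabla\eta(x^k),x^{k+1}-x^k\rangle\neq0$), the supporting hyperplane at $x^k$ has inward normal $\nabla\mu(x^k)$, so $C \subseteq \{z : \langle\nabla\eta(x^k), z-x^k\rangle \geq 0\}$. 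Because $\langle\nabla\eta(x^k), x^{k+1}-x^k\rangle<0$, the iterate $x^{k+1}$ lies strictly outside $C$, that is $\mu(x^{k+1})<\mu(x^k)$ and $\eta(x^{k+1})<\eta(x^k)$.

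Finally, since $x^k\to y_i$ forces $x^k\in U$ for all $k\geq K$, the sequence $\eta(x^k)$ is strictly decreasing for $k\geq K$ and hence bounded above by $\eta(x^K)$, contradicting $\eta(x^k)\to\infty$. I expect the supporting-hyperplane step to be the crux: it is the only place that genuinely uses the convexity hypothesis (3) rather than the local data in (1)--(2), and it must be stated carefully at the smooth boundary point $x^k$ where the inward normal is $\nabla\mu(x^k)$. The remaining ingredients --- the vanishing of $p$ at $y_i$, the divergence of $\mu(x^k)$, and the scalar identity for $\beta$ --- are routine.
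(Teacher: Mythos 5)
Your core mechanism is the same as the paper's: in the deflated regime $\beta<0$, the identity $\langle\nabla\eta(x^k),\hat p^k\rangle=\beta^{-1}-1<0$ makes the deflated step a descent direction for $\mu$, and the convexity of superlevel components from part (3) of Definition \ref{def. Optimization Deflation Operator} upgrades this first-order statement to an actual exit from the superlevel set via a supporting hyperplane at the boundary point $x^k$. Your closing contradiction ($\mu(x^k)$ eventually strictly decreasing versus $\mu(x^k)\to\infty$) is if anything a tighter packaging than the paper's, whose set $C$ depends on $k$ while it speaks of ``the tail lying within $C$''. However, there are two genuine issues.

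First, you take $p(y_i)=0$ as a ``preliminary fact'', justified by asserting that deflated points are previously computed minimizers. That is not a hypothesis of the theorem: the $y_i$ are arbitrary deflated points, and part (2) of Definition \ref{def. Optimization Deflation Operator} is conditional on $p(y_i)=0$, hence vacuous when $p(y_i)\neq0$; in that case your punctured neighbourhood $U$ with $\langle\nabla\eta,p\rangle\geq c_0>1$ does not exist and your argument gives nothing. The paper's proof avoids this assumption by a two-case analysis of the tail: for an undeflated tail, Theorem \ref{thm: undeflatedGN} forces $J_r^Tr\to0$, hence $p^k\to0$ by uniform full rank, and only then is part (2) invoked (contradicting the branch selection); for a deflated tail, $\hat p^k\to0$ forces either $p^k\to0$ (part (2) again) or $\beta\to-\infty$, which yields $\langle\nabla\eta,p^k\rangle>1$ directly with no appeal to part (2). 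Your shortcut of eliminating the undeflated branch outright on $U$ is elegant --- it bypasses Theorem \ref{thm: undeflatedGN} entirely --- but it proves a weaker statement in which stationarity of the deflated points is an added hypothesis.

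Second, your set $C=\{x:\mu(x)\geq\mu(x^k)\}$ is not convex when $n\geq2$: part (3) makes each $L_c(\mu)$ a union of $n$ convex components, one per deflated point, so your intersection over $c^*<c<\mu(x^k)$ is again a union of $n$ blobs. You must pass to the component containing $y_i$ (the nested components intersect to a convex set; the paper takes the closure of the convex component of $L_{\mu(x^k)}(\mu)$ containing $y$), and then add a word on why leaving this component still yields $\mu(x^{k+1})<\mu(x^k)$: since the tail lies in the $y_i$-component of a fixed $L_c(\mu)$ with $c>c^*$, and any component of a higher superlevel set meeting it must itself be the $y_i$-component, the exit forces the decrease. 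With these two repairs --- and noting that, exactly like the paper, you implicitly use that the components of $L_c(\mu)$ are open neighbourhoods of $y_i$ in deducing $\mu(x^k)\to\infty$ --- your argument is sound and coincides with the paper's on the substantive case.
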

\begin{proof}
Assume for contradiction  that $x^k $ converges to $y \in \{y_1,\dots,y_n\}$. %Thus $\{\mu(x^k)\}\rightarrow \infty $ 

Recall that the deflated step, which we will denote by $\hat p^k$, is
\begin{equation}
        \hat p^k = \frac{p^k}{\beta(x^k)}= \frac{p^k}{1-\langle\nabla\eta,p^k\rangle}.
\end{equation}
When $\beta(x^k)<1-\epsilon$ then the deflated step is taken: $x^{k+1} = x^k + \hat p^k$, and when $\beta(x^k) \geq 1-\epsilon$ then the undeflated step is taken with a line search on $f$: $x^{k+1} = x^k + \alpha \, p^k$. Thus we need to investigate the convergence behaviour of both of these steps.  By the continuity of $\beta$ (as a function of $x \in \mathbb{R}^d \setminus \{y_1,y_2,\ldots,y_n\}$), the tail of the sequence $\{x^k\}_{k=0}^\infty$ is formed by taking either all undeflated steps, $\alpha \, p^k$, or all deflated steps, $\hat p^k$.

First we assume that the tail of $\{x^k\}_{k=0}^\infty$ only takes undeflated steps. Then, as we have assumed the sequence (with a line search) converges, by Theorem \ref{thm: undeflatedGN}, we have $J_r(x^k)^T r(x^k) \to 0$. Since $J_r$ is uniformly full rank, this implies that $p^k \rightarrow 0$. By Definition  \ref{def. Optimization Deflation Operator}, part (2),
\begin{equation}\label{eqn: condition 2}
    \liminf_{ k \rightarrow \infty} \langle  \nabla\eta(x^k), p^k\rangle >1.
\end{equation} 
By definition of the algorithm, however, this means the algorithm takes the deflated step, $\hat p^k$, a contradiction. 

The remaining case to consider is that the tail of $\{x^k\}_{k=0}^\infty$ takes only deflated steps. The convergence of the sequence implies $\lim_{k\to \infty} \hat{p}^k = 0$. Since $\hat{p}^k = \beta(x^k)^{-1} p^k$, this implies that $p^k \to 0$ or $\beta(x^k) \to -\infty$ (note that deflated steps are taken when $\beta(x^k) < 1-\epsilon$, so $\beta(x^k) \to +\infty$ is impossible). First we assume that $p^k\rightarrow0$. Then by part (2) of Definition  \ref{def. Optimization Deflation Operator} we again have the property in equation \eqref{eqn: condition 2} above. If we assume instead that $\beta(x^k) \rightarrow -\infty$ then we also satisfy \eqref{eqn: condition 2}.

The property in equation \eqref{eqn: condition 2} implies that the level set $L_{1}(\delta)$, where $\delta(x) = \langle \nabla \eta(x), p(x)\rangle$, contains the tail of $\{x^k\}_{k=0}^\infty$. Note that $L_1(\delta)$ is the red region in Figure \ref{fig: epsilon contours}. For all points $x^k$ within the region $L_{1}(\delta)$ we know that $1<\langle \nabla \eta(x^k), p^k\rangle$, and since $\beta(x^k) <0$, we have $\langle \nabla\eta(x^k), \hat p^k \rangle < 0$, and hence $\langle \nabla\mu(x^k), \hat p^k \rangle < 0$.

Now, by part (3) of Definition  \ref{def. Optimization Deflation Operator} there exists a level set $ L_{c}(\mu)$ containing $y$, and also the tail of $\{x^k\}_{k=0}^\infty$ (since the level set is a nonempty open set), such that the connected component containing $y$ is convex and the same holds for all higher level sets. Let $C$ be the closure of the particular convex subset of $ L_{\mu(x^k)}(\mu)$ that contains $y$, where $k$ is sufficiently large so that $\mu(x^k) > c$ (which is guaranteed since $\{x^k$\} converges to $y$ which is contained in the nonempty level set $ L_{c}(\mu)$). Since $x^k$ is on the boundary of the closed convex set $C$, and $\langle \nabla \mu(x^k), \hat p^k \rangle < 0$, we have that $x^{k+1} = x^k + \hat{p}^k$ lies outside of $C$ (because the hyperplane associated to $\langle \nabla\mu(x^k), \hat p^k \rangle < 0$ separates $C$). This contradicts that the tail of $\{x^k\}_{k=0}^\infty$ lies within $C$.
\end{proof}

In this section we have shown that the ``good'' deflated Gauss--Newton method is locally convergent and does not converge to deflated points, as in Definition  \ref{def. Optimization Deflation Operator}.

\subsection{The ``bad" deflated Gauss--Newton method}
\label{Sec. Bad GN}
This method results from applying the Gauss--Newton method to $\mu r$. Why is it so ``bad''? Local minima, $\tilde{x}$, of the deflated nonlinear least squares problem satisfy $J_{\mu r}^T\mu r = 0$, so are not local minima of the undeflated problem (unless $r(\tilde{x}) = 0$). However, the use of the undeflated step when $\langle p^k ,\nabla \eta (x^k) \rangle \leq \varepsilon$ ensures that the behaviour of the method in a neighbourhood of an undiscovered local minimum of $\|r\|^2$ is unaffected. Thus for sufficiently large $\epsilon$ the ``bad'' deflated Gauss--Newton iterations will not converge to $\tilde{x}$, but exactly how small $\epsilon$ could be taken is not clear. This phenomenon can be clearly seen in Figure \ref{fig: epsilon convergence comparison}. The naming convention is intended to be a light-hearted reference to that of Broyden's ``good'' and ``bad'' methods.

The derivation taking this definition to Algorithm \ref{alg. Bad Deflated Gauss--Newton} requires that we understand the Moore-Penrose pseudoinverse of $J_{\mu r}$. The Sherman--Morrison formula does not apply to non-square matrices, but fortunately we can use the work of Meyer in \cite{meyer_generalized_1973} (later generalized in \cite{guttel_sherman--morrison--woodbury_2024}) to adapt the update formula. 

We assume that $J_r$ is full rank, and thus $\nabla\eta \in R(J_r^T)$. Therefore we can utilize Theorem 5 from \cite{meyer_generalized_1973}. The possibility that $J_r$ may be rank-deficient is beyond the scope of this paper.

\begin{theorem}\label{Thrm:GN2 full rank}
    Let $A\in \mathbb R^{m\times l}$ be full rank, $u\in\mathbb R^m$ and $ v\in\mathbb R^\ell$. Then \begin{equation}
        (A+uv^T)^+u =\frac{\beta}{\omega}{A^+u} +\frac{\|Pu\|_2^2}{\omega}({A^TA)^{-1} v} 
    \end{equation}
    where $\beta = 1+\mu^{-1}{\nabla\mu}^T A^+u$, \quad $\omega= \|Pu\|_2^2 \| A^{+T}v\|_2^2 + \beta^2$, \quad and $P = I-AA^+$. %(the orthogonal projector onto the kernel of $A^T$, or equivalently the complement of the range of $A$)
\end{theorem}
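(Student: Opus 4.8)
The plan is to establish the identity by a direct Moore--Penrose computation; the same formula can alternatively be obtained by specializing the general rank-one update of Meyer (Theorem~5 of \cite{meyer_generalized_1973}) to the case $v \in R(A^T)$, $u \notin R(A)$ and right-multiplying by $u$. Since $(A^TA)^{-1}$ appears in the statement, $A \in \mathbb{R}^{m\times\ell}$ is tall with full column rank, so $A^+ = (A^TA)^{-1}A^T$, $A^+A = I$, and $P = I - AA^+$ is the orthogonal projector onto $R(A)^\perp$. I will assume $\omega \neq 0$, which is exactly the condition needed for the right-hand side to be defined; it holds whenever $v \neq 0$ and $u \notin R(A)$, the only excluded case being $u \in R(A)$ together with $\beta = 0$.

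First I would reduce to a classical inverse. Writing $M := A + uv^T$, I claim $M$ has full column rank: if $Mx = 0$ then $Ax = -(v^Tx)\,u$, and a nonzero $v^Tx$ would force $u \in R(A)$, so $v^Tx = 0$ and hence $x = 0$ by full column rank (the sub-case $u \in R(A)$, covered by the theorem when $\beta \neq 0$, follows by writing $M = A(I + zv^T)$ with $u = Az$ and $\beta = 1 + v^Tz$). Consequently $M^+ = (M^TM)^{-1}M^T$, and the claim becomes the linear-algebra identity $(M^TM)^{-1}M^Tu = \frac{\beta}{\omega}A^+u + \frac{\|Pu\|_2^2}{\omega}(A^TA)^{-1}v$.

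Next I would expand, with $a := A^Tu$ and $s := \|u\|_2^2$, the product $M^TM = A^TA + WCW^T$, where $W$ has columns $a$ and $v$ and $C = \left(\begin{smallmatrix} 0 & 1 \\ 1 & s\end{smallmatrix}\right)$, while $M^Tu = a + sv$. Applying the Woodbury identity with $S := A^TA$ and using
\begin{equation*}
a^TS^{-1}a = s - \|Pu\|_2^2, \qquad a^TS^{-1}v = v^TS^{-1}a = \beta - 1, \qquad v^TS^{-1}v = \|A^{+T}v\|_2^2,
\end{equation*}
which follow from $S^{-1} = A^+A^{+T}$, $S^{-1}A^T = A^+$, $AA^+$ being the projector onto $R(A)$, and $\|Pu\|_2^2 = s - \|AA^+u\|_2^2$, the inner Schur matrix collapses to
\begin{equation*}
C^{-1} + W^TS^{-1}W = \begin{pmatrix} -\|Pu\|_2^2 & \beta \\ \beta & \|A^{+T}v\|_2^2 \end{pmatrix},
\end{equation*}
whose determinant is precisely $-\omega$. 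This is the step where $\omega$ is born, and it is the crux of the argument.

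Finally I would substitute $S^{-1}a = A^+u$ and $S^{-1}v = (A^TA)^{-1}v$, form $W^TS^{-1}(a+sv)$, apply the $2\times2$ inverse above, and subtract from $S^{-1}(a+sv) = A^+u + s\,(A^TA)^{-1}v$ as prescribed by Woodbury; collecting the coefficients of $A^+u$ and $(A^TA)^{-1}v$ then yields exactly $\beta/\omega$ and $\|Pu\|_2^2/\omega$. I expect the sole obstacle to be bookkeeping: checking that the $s$- and $\beta$-laden cross terms cancel, i.e.\ that the two components of $\left(C^{-1}+W^TS^{-1}W\right)^{-1}W^TS^{-1}(a+sv)$ simplify to $1 - \beta/\omega$ and $s - \|Pu\|_2^2/\omega$ respectively. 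No conceptual difficulty remains once $\omega \neq 0$ is assumed.
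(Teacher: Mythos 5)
Your proof is correct, and it takes a genuinely different route from the paper's. The paper's proof simply quotes Meyer's Theorem~5, which gives an explicit formula for the full pseudoinverse $(A+uv^T)^+$, right-multiplies by $u$, and simplifies the resulting scalar algebra (substituting $p$ and $q$, expanding $\beta q^Tu$, and using the definition of $\omega$) until the two-term expression appears. You never touch a pseudoinverse update formula: you observe that $M=A+uv^T$ inherits full column rank (except precisely when $u\in R(A)$ and $\beta=0$), write $M^+=(M^TM)^{-1}M^T$, and apply Woodbury to $M^TM = A^TA + WCW^T$ with $W=[\,a\;\;v\,]$, $a=A^Tu$, $C=\left(\begin{smallmatrix}0&1\\1&s\end{smallmatrix}\right)$. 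I checked the computation: $C^{-1}=\left(\begin{smallmatrix}-s&1\\1&0\end{smallmatrix}\right)$, the capacitance matrix is $\left(\begin{smallmatrix}-\|Pu\|_2^2&\beta\\ \beta&\|A^{+T}v\|_2^2\end{smallmatrix}\right)$ with determinant $-\omega$ as you claim, $W^TS^{-1}(a+sv)=\left(\begin{smallmatrix}s\beta-\|Pu\|_2^2\\ \beta-1+s\|A^{+T}v\|_2^2\end{smallmatrix}\right)$, and the two components of the Woodbury correction come out to exactly $1-\beta/\omega$ and $s-\|Pu\|_2^2/\omega$, so the subtraction yields $\frac{\beta}{\omega}A^+u+\frac{\|Pu\|_2^2}{\omega}(A^TA)^{-1}v$; the bookkeeping closes. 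Your route buys three things the paper's does not: it is self-contained (Woodbury only, no appeal to \cite{meyer_generalized_1973}); it makes the validity condition $\omega\neq 0$ and the exact failure case ($u\in R(A)$ with $\beta=0$, where $A+uv^T$ drops rank) explicit, on which the paper's statement is silent; and it handles the sub-case $u\in R(A)$, $\beta\neq 0$ uniformly, whereas Meyer's Theorem~5 as invoked assumes $u\notin R(A)$ --- relevant to the application, where $u=-r$ can lie in $R(J_r)$ near zero-residual solutions. What the paper's route buys is brevity on the page and a formula for the whole pseudoinverse rather than just its action on $u$, though only the latter is ever needed. One cosmetic note: the theorem's $\beta$ is stated in application notation ($\mu^{-1}\nabla\mu$ standing in for $v$); your reading $\beta = 1+v^TA^+u$ is the intended one.
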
 
\begin{proof}
From \cite[p.316]{meyer_generalized_1973} we use Theorem 5:
\begin{flalign}\label{original generalized formula}
    (A+uv^T)^+ &= A^+ + \frac{1}{\beta}A^+ {A^+}^Tvu^TP - \frac{\beta}{\omega} pq^T, 
\end{flalign}
where
\begin{flalign*}
    p&= \frac{\|Pu\|_2^2}{\beta}A^+ {A^+}^T v +A^+u,\qquad
    q^T= \frac{v^TA^+{A^+}^Tv}{\beta}u^TP +v^T{A^+}.
    %q= \frac{v^TA^+{A^+}^Tv}{\beta}Pu +{A^+}^Tv,\quad
\end{flalign*}

Similarly to the original update formula, we do not  need to store the pseudoinverse explicitly, since we only need the result of its multiplication with $u$. We thus calculate  this action:
 \begin{align}
    (A+uv^T)^+u &= A^+u + \frac{\|Pu\|_2^2}{\beta}A^+ {A^+}^Tv - \frac{\beta}{\omega} pq^Tu\nonumber\\
    \intertext{Substituting in $p$:}
    &=A^+u + \frac{\|Pu\|_2^2}{\beta}A^+ {A^+}^Tv - \frac{\beta}{\omega} \left(\frac{\|Pu\|_2^2}{\beta}A^+ {A^+}^T v +A^+u\right)q^Tu\nonumber\\
    \intertext{Expanding out the brackets and rearranging the scalar:}
    &=A^+u + \frac{\|Pu\|_2^2}{\beta}A^+ {A^+}^Tv - \frac{\|Pu\|_2^2q^Tu}{\omega}A^+ {A^+}^T v -\frac{\beta q^Tu}{\omega}A^+u\nonumber\\
    \intertext{Collecting like terms:}
    &=\left(1 - \frac{\beta q^Tu}{\omega}\right)\left({A^+u} +\frac{\|Pu\|_2^2}{\beta}{A^+ {A^+}^Tv}\right) \nonumber\\ 
    \intertext{Using the inverse since $A^TA$ is non singular:}
    &=\left(1 - \frac{\beta q^Tu}{\omega}\right)\left({A^+u} +\frac{\|Pu\|_2^2}{\beta}({A^TA)^{-1} v}\right) \nonumber\\ 
    \intertext{Expanding $\beta q^Tu$}
     &=\left(1 - \frac{{v^TA^+{A^+}^Tv\|Pu\|_2^2} + \beta v^T{A^+}u}{\omega}\right)\left({A^+u} +\frac{\|Pu\|_2^2}{\beta}({A^TA)^{-1} v}\right) \nonumber\\ 
     \intertext{Substituting in $\omega$}
     &=\left(1 - \frac{\omega - \beta^2 + \beta v^T{A^+}u}{\omega}\right)\left({A^+u} +\frac{\|Pu\|_2^2}{\beta}({A^TA)^{-1} v}\right) \nonumber\\ 
      \intertext{Finally, rearranging we get:}
     &=\left(1 - 1-\frac{\beta v^T{A^+}u-\beta^2}{\omega}\right)\left({A^+u} +\frac{\|Pu\|_2^2}{\beta}({A^TA)^{-1} v}\right) \nonumber\\
     &=\frac{\beta}{\omega}\left({A^+u} +\frac{\|Pu\|_2^2}{\beta}({A^TA)^{-1} v}\right)\nonumber \\
     &=\frac{\beta}{\omega}{A^+u} +\frac{\|Pu\|_2^2}{\omega}({A^TA)^{-1} v}\label{first universal update}
\end{align}
\end{proof}

Setting $A = J_r$, $u = -r$ $v = -\nabla \eta$, Theorem \ref{Thrm:GN2 full rank} implies that the ``bad'' deflated Gauss--Newton step may be written as:
\begin{equation}
    \hat p^k  = \beta_1 \, p^k + \beta_2\, \left(J_r^T J_r\right)^{-1} {\nabla\eta} 
\end{equation}
with  
\begin{align}\label{eqn:beta1beta2}
       \beta_1  = \frac{\beta}{\omega} \quad\text{ and }\quad
   \beta_2  = -\frac{\|Pr\|_2^2}{\omega},
\end{align}
where 
\begin{align}
        \beta &=  1-\langle p^k, \nabla \eta\rangle , \quad P = I-J_r(x^k)J_r(x^k)^+, \quad 
        \omega= \|Pr\|_2^2\|J_r^{+T}\nabla\eta\|_2^2+\beta^2.
\end{align}

Interestingly, this deflated step contains a scalar multiple of the undeflated step, just like the deflated Newton and ``good'' Gauss--Newton methods, but also a term in the $-(J_r^TJ_r)^{-1} \nabla\eta$ direction. This is a step in a direction that reduces $\eta$  since  $(J_r^TJ_r)$ is positive definite. 

Clearly \eqref{first universal update} is  a generalization of the original deflation  update formula discussed in \cite{farrell_deflation_2020}, as the square case can be easily recovered: if we assume that $J_r$  is square, then $J_r^+ = J_r^{-1}$ so $J_rJ_r^+ = J_rJ_r^{-1} =I$ and $P = 0$, therefore $\omega = \beta^2$  and the inverse formula simplifies to the original Newton deflation update formula:
\begin{flalign}\label{square case update}
    (J_r+r\nabla\eta^T)^{-1} r=\frac{\beta}{\omega}{J_r^{-1} r} = \frac{\beta}{\beta^2}{J_r^{-1} r} =\frac{ p^k }{\beta}
\end{flalign}

It is important to note that the ``bad" deflated Gauss--Newton method does not actually solve the minimization problem given in \eqref{NLLS equation} but instead can be thought of as solving a nearby problem. This is because the ``bad" deflated Gauss--Newton  method minimizes the modified function $\mu^2f$. We therefore suggest using the modification used in the ``good" Gauss--Newton method, where the undeflated method is used when $\langle p, \nabla \eta\rangle <\epsilon$. This guarantees that the method will converge to local minima of $f$ instead of those of $\mu^2f$.

\subsubsection{Convergence and non-convergence}\label{Sec. BadGN Convergence}
Similar to the ``good" deflated Gauss--Newton method discussed above we retain the local convergence properties of the Gauss--Newton method away from any deflated points. 

Again it is important to show that the method will not converge to any deflated points.

\begin{theorem}
Let $y_1,\dots,y_n \in \mathbb R^\ell$ be a set of deflated points and $\mu(x;y_1,\dots,y_n)$ be a deflation operator as in Definition  \ref{def. Optimization Deflation Operator}. Let $\{x^k\}_{k=0}^\infty$ be the iterates of the ``bad'' deflated Gauss--Newton method (Algorithm \ref{alg. Bad Deflated Gauss--Newton}). Assume that $r$ is Lipschitz continuously differentiable, $J_r$ is uniformly full rank in a neighbourhood of $\{x^k\}_{k=0}^\infty$ and assume all steps are well-defined. Then $x^k$ will not converge to any of the deflated points $y_1,\dots,y_n$.
\end{theorem}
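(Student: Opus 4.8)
The plan is to argue by contradiction along exactly the lines of the proof of Theorem~\ref{Thrm: Good GN NC}, since the only structural change is the form of the deflated step. Suppose the iterates converge to some $y \in \{y_1,\dots,y_n\}$. By continuity of the branching quantity $x \mapsto \langle \nabla\eta(x), p(x)\rangle$ on $\mathbb{R}^\ell \setminus \{y_1,\dots,y_n\}$, the tail of $\{x^k\}$ consists either entirely of undeflated (line-search) steps or entirely of deflated steps $\hat p^k$. The undeflated case is \emph{verbatim} the good-method argument: if the tail takes only line-search Gauss--Newton steps and converges, Theorem~\ref{thm: undeflatedGN} gives $J_r(x^k)^T r(x^k) \to 0$, and uniform full rank of $J_r$ forces $p^k \to 0$; part~(2) of Definition~\ref{def. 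Optimization Deflation Operator} then yields $\liminf_k \langle \nabla\eta(x^k), p^k\rangle > 1 \ge \epsilon$, so the algorithm would in fact have selected the deflated branch, a contradiction.

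For the deflated case, I would first establish that the tail lies where $\beta < 0$. Since each deflated point is a local minimizer, $\nabla f(y) = J_r(y)^T r(y) = 0$, and full rank gives $p(y) = -J_r(y)^+ r(y) = 0$; by continuity of $p$, the convergence $x^k \to y$ forces $p^k \to 0$ (alternatively, as in the good-method proof, one extracts $p^k \to 0$ or $\beta(x^k) \to -\infty$ from $\hat p^k \to 0$, both of which suffice). Part~(2) of Definition~\ref{def. Optimization Deflation Operator} then gives $\liminf_k \langle\nabla\eta(x^k), p^k\rangle > 1$, so the tail lies in the level set $L_1(\delta)$ with $\delta(x) = \langle\nabla\eta(x), p(x)\rangle$, where $\beta(x^k) = 1 - \langle\nabla\eta(x^k), p^k\rangle < 0$.

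The crucial new step, replacing the one-line observation available in the good case, is to show $\langle \nabla\mu(x^k), \hat p^k\rangle < 0$ for the \emph{two-term} bad step $\hat p^k = \beta_1 p^k + \beta_2 (J_r^T J_r)^{-1}\nabla\eta$. Since $\nabla\mu = \mu\nabla\eta$ with $\mu > 0$, it suffices to sign $\langle\nabla\eta, \hat p^k\rangle = \beta_1 \langle\nabla\eta, p^k\rangle + \beta_2 \langle\nabla\eta, (J_r^T J_r)^{-1}\nabla\eta\rangle$. In the tail, $\beta < 0$ and $\omega > 0$ give $\beta_1 = \beta/\omega < 0$, while $\langle\nabla\eta, p^k\rangle > 1 > 0$, so the first term is strictly negative; moreover $\beta_2 = -\|Pr\|_2^2/\omega \le 0$ together with positive definiteness of $(J_r^T J_r)^{-1}$ makes the second term non-positive (this is exactly the $\eta$-descent term noted after \eqref{eqn:beta1beta2}). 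Hence $\langle\nabla\mu(x^k), \hat p^k\rangle < 0$.

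With this sign established, the conclusion is identical to the good case: by part~(3) of Definition~\ref{def. Optimization Deflation Operator} the tail and $y$ lie in a convex superlevel set of $\mu$, and taking $C$ to be the closure of the convex component of $L_{\mu(x^k)}(\mu)$ containing $y$, the iterate $x^k$ lies on $\partial C$ while $\hat p^k$ points strictly out of the superlevel set; thus $x^{k+1} = x^k + \hat p^k \notin C$, contradicting that the tail lies in $C$. I expect the main obstacle to be precisely the sign computation for the augmented step, but it resolves favourably because the extra $-(J_r^T J_r)^{-1}\nabla\eta$ direction is itself $\eta$-descending and $\beta_2 \le 0$, so the second term reinforces rather than opposes the separation argument.
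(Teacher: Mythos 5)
Your undeflated-tail case and your red-region ($\beta<0$) analysis both match the paper's proof: your term-by-term signing of $\langle\nabla\eta,\hat p^k\rangle$ agrees with the paper's closed form $\langle\hat p^k,\nabla\eta\rangle = \beta/\omega - 1$, and the convex-separation conclusion via part (3) of Definition \ref{def. Optimization Deflation Operator} is exactly the ending of Theorem \ref{Thrm: Good GN NC}. The genuine gap is the step where you ``first establish that the tail lies where $\beta<0$''. You justify it by asserting that each deflated point is a local minimizer of $f$, so that $\nabla f(y)=0$, $p(y)=0$, and $p^k\to 0$ by continuity. But neither the theorem nor Definition \ref{def. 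Optimization Deflation Operator} assumes $p(y_i)=0$: part (2) of the definition is deliberately conditional (``if $p(y_i)=0$ then \dots''), so the statement covers deflated points that are not stationary points of $f$, and your proof silently strengthens the hypotheses. Without $p(y)=0$, nothing forces $p^k\to 0$, and the deflated tail may lie in the region $0<\beta(x^k)<1-\epsilon$ (the yellow region, condition \eqref{eqn: Bad cond epsilon} in the paper), where your sign computation fails: there $\beta_1=\beta/\omega>0$, the first term of $\langle\nabla\eta,\hat p^k\rangle$ is positive, and descent for $\mu$ is not automatic.

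Your fallback --- ``as in the good-method proof, one extracts $p^k\to 0$ or $\beta(x^k)\to-\infty$ from $\hat p^k\to 0$'' --- does not survive the change of step. In the good method $\hat p^k = p^k/\beta$ is a scalar multiple of $p^k$, so that dichotomy is immediate; the bad step $\hat p^k = \beta_1\,p^k + \beta_2\,(J_r^TJ_r)^{-1}\nabla\eta$ is a combination of two directions which can tend to zero by cancellation while $p^k$ stays bounded away from zero. This is precisely the case the paper spends the second half of its proof on: in the yellow region it splits on $\beta<\omega$ (then $\langle\hat p^k,\nabla\eta\rangle=\beta/\omega-1<0$ and the separation argument applies) versus $\beta\geq\omega$, where the resulting inequality $\|Pr\|_2^2\|J_r^{+T}\nabla\eta\|_2^2\leq\beta(1-\beta)$ feeds a norm estimate (culminating in \eqref{eqn:lastlinebigproof}) that uses $\|\nabla\eta(x)\|_2\to\infty$ as $x\to y$ to conclude $J_rp^k\to 0$, hence $p^k\to 0$ by uniform full rank, contradicting $0<\beta$ via part (2). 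As written, your argument proves the theorem only under the additional hypothesis that every deflated point satisfies $p(y_i)=0$; to prove the theorem as stated you need the yellow-region analysis.
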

\begin{proof}

Assume for contradiction  that $x^k $ converges to $y\in \{y_1,\dots,y_n\}$.

Recall that the deflated step, which we will denote by $\hat p^k$, is
\begin{equation*}
        \hat p^k = \frac \beta \omega\left(p^k - \frac{r^TPr}{\beta}(J_r^TJ_r)^{-1} \nabla \eta\right) 
\end{equation*}
Recall also that the deflated step, $\hat p^k$, is taken when $\beta(x^k)<1-\epsilon$, and when $\beta(x^k)\geq 1-\epsilon$ the undeflated step is taken with a line search applied, $\alpha \, p^k$. Thus similarly to the proof of Theorem \ref{Thrm: Good GN NC} we need to consider the convergence behaviour of both of these steps. Again by the continuity of $\beta$ (as a function of $x\in\mathbb R^d\backslash \{y_1,y_2,\dots,y_n\}$), the tail of the sequence $\{x^k\}_{k=0}^\infty$ is formed by taking either all undeflated steps, $p^k$, or all deflated steps, $\hat p^k$.

First we assume that the tail of $\{x^k\}_{k=0}^\infty$ only takes undeflated steps, and the proof follows as in Theorem \ref{Thrm: Good GN NC}: as we have assumed the sequence (with a line search) converges, by Theorem \ref{thm: undeflatedGN}, we have $J_r(x^k)^T r(x^k) \to 0$. Since $J_r$ is uniformly full rank, this implies that $p^k \rightarrow 0$. By Definition \ref{def. Optimization Deflation Operator} part (2)
\begin{equation}\label{eqn: condition 2 Bad}
    \liminf_{ k \rightarrow \infty} \langle  \nabla\eta(x^k), p^k\rangle >1.
\end{equation} 
By definition of the algorithm, however, this means the algorithm takes the the deflated step in the tail, a contradiction. 

The remaining case to consider is that the tail of $\{x^k\}_{k=0}^\infty$ takes only deflated steps. Convergence of the $x^k$ implies that $\liminf_{ k \rightarrow \infty} \hat p^k=0$. Again, by the continuity of $\beta$ it must be the case that for the tail of $\{x^k\}_{k=0}^\infty$ either 
\begin{align}\label{eqn: Bad cond 0}
    &\beta(x^k)<0\qquad\text{or}\\
0<&\beta(x^k) <1-\epsilon,\label{eqn: Bad cond epsilon}
\end{align} 
which correspond to the red and yellow regions (respectively) in Figure \ref{fig: epsilon contours}. The case $\beta = 0$ yields an undefined step.
 
 First let us assume that property \eqref{eqn: Bad cond 0} is satisfied. Let us examine $\langle \hat{p}^k ,\nabla \eta\rangle$:
 \begin{align}
     \langle \hat{p}^k, \nabla \eta\rangle &= \frac{\beta}{\omega}\langle p^k, \nabla\eta\rangle - \frac{\|Pr\|^2}{\omega}\langle \nabla\eta, (J_r^T J_r)^{-1} \nabla \eta\rangle \nonumber\\
     &= \frac{\beta - \beta^2}{\omega} - \frac{\|Pr\|^2 \|J_r^{+T} \nabla \eta \|^2}{\omega} \nonumber \\
     &= \frac{\beta}{\omega}- 1.\label{eqn: beta omega}
 \end{align}
Since $\beta < 0$ and $\omega > 0$, we have that the deflated step, $\hat p^k$, is a descent direction for $\mu$. Therefore we can derive the same contradiction as in the second half of the proof of Theorem \ref{Thrm: Good GN NC}.

The remaining case to consider is when the tail of $\{x^k\}_{k=0}^\infty$ satisfies property \eqref{eqn: Bad cond epsilon}. Note that if $\beta <\omega $ then by \eqref{eqn: beta omega} we know that  $\hat p^k$, is a descent direction for $\mu$, and come to the same contradiction as above. Thus we must conclude that $\beta\geq\omega$, which leads to the following:
\begin{equation}
    \omega \leq \beta \implies \|Pr\|_2^2\|J_r^{+T}\nabla\eta\|_2^2 \leq \beta(1-\beta).
\end{equation}
From this we will deduce that $p^k \to 0$, as follows. By the definition of $\hat{p}^k$,
\begin{align}
J_r p^k = \frac{\omega}{\beta} J_r \hat{p}^k + \frac{\|Pr\|_2^2}{\beta} J_r^{+T} \nabla \eta, \label{eqn:betaomega}
\end{align}
since $J_r^{+T} = J_r (J_r^T J_r)^{-1}$. Taking norms and using the inequalities in \eqref{eqn:betaomega}, we have
\begin{align}
    \|J_r p^k\|_2 &\leq \frac{\omega} {\beta}\|J_r\hat{p}^k\|_2 + \frac{\|Pr\|_2^2 \|J_r^{+T} \nabla \eta\|_2}{\beta} \nonumber \\
    &\leq \|J_r \hat{p}^k\|_2 + \frac{1-\beta}{\|J_r^{+T} \nabla \eta\|_2} \nonumber \\
    &\leq \|J_r\|_2\left(\|\hat{p}^k\|_2 + \frac{1}{\|\nabla \eta\|_2}\right).\label{eqn:lastlinebigproof}
\end{align}
The last line follows from $\beta > 0$ and $\|\nabla \eta\|_2 = \| J_r^T J_r^{+T} \nabla \eta\|_2 \leq \|J_r\|_2\|J_r^{+T} \nabla \eta\|_2$.

The term in equation \eqref{eqn:lastlinebigproof} tends to zero because $\|J_r(x^k)\|_2$ is uniformly bounded by continuity of $J_r$, and because $\lim_{x\to y} \|\nabla \eta(x)\|_2 = \infty$ by Definition \ref{def. Optimization Deflation Operator} part (3). Because $J_r$ is uniformly full rank, $J_r p^k \to 0$ implies $p^k \to 0$. However, $p^k\rightarrow0$ implies \eqref{eqn: Bad cond 0} by Definition \ref{def. Optimization Deflation Operator} part (2), which is a contradiction.
\end{proof}
\FloatBarrier
\subsection{Gauss--Newton comparison}

In this section we have introduced the ``good" and ``bad"   deflated Gauss--Newton methods. The two big advantages of these methods over the deflated Newton method are that each step is computationally cheaper due to the fact that they do not require the calculation of the Hessian of $f$, and that they only converge to local minima, meaning that fewer deflations are required to find all local minima. 

There are also advantages and disadvantages between the two different deflated Gauss--Newton methods we have discussed in this section. We note that each step of the ``good'' method is cheaper than the ``bad'' method, but it is possible that the extra term in the deflation step means that the method may require fewer iterations to converge. In general we have not found that one method or the other is consistently better than the other.

\section{Implementation and experiments}\label{sec: Application}
We will first discuss a couple of two dimensional examples, then move onto some high dimensional problems, and finally we will investigate a physical example coming from an Inelastic Neutron Scattering experiment. We will cover a range of  zero and non-zero residual problems. 

The deflation operator defined in \eqref{eqn. Deflation Operator} is used in all of the  following experiments, with multiple deflations applied as in \eqref{eqn. Multi Shift Operator} and parameters  fixed as: $\theta =2, \sigma=1$  and $\epsilon = 0.01$ (discussed in Figure \ref{fig: epsilon contours}). We have tried using different deflation operators but did not find any meaningful differences worth reporting. We used a quadratic line search whenever a line search was applied. The least squares problems are solved using \texttt{lsqminnorm} for the ``good'' method and by a QR factorization for the ``bad'' method. This QR factorization is recycled to compute the extra terms in the deflated Gauss--Newton method. Open source MATLAB code for all experiments can be found at \url{https://github.com/AlbanBloorRiley/DeflatedGaussNewton}.

% \subsection{Himmelblau's function}\label{sec. Himmelblau}
% \mwgreen{One problem used earlier in the paper is Himmelblau's function} \cite{himmelblau_applied_1972}, \mwgreen{a common optimization test problem. This is a two dimensional system that has 4 roots (and one local maximum):}
% \begin{equation}
%     r(x,y)=
%     \begin{pmatrix}
%     x^{2}+y-11\\
%     x+y^{2}-7
%     \end{pmatrix}
% \end{equation}
% \mwgreen{As can be seen in Figures} \ref{fig: Simple Beta Contour} and \ref{fig: epsilon contours} \mwgreen{the ``good'' Gauss--Newton method has no problem finding all 4 minima - this is because the method is equivalent to the rootfinding Newton method for a square system.}
\subsection{A test problem with many local minima}
This is a nonlinear least squares problem in two variables with 42 local minima (143 stationary points), defined using a truncated product expansion in $\cos$ and $\sin$: 
\begin{align}
    r(x,y) = \begin{pmatrix}\displaystyle
        a \prod^3_{k=1}1-\frac{(x+y)^2}{k^2\pi^2}\\\displaystyle
        a\prod^3_{k=1}1-\frac{(x-y)^2}{(k-1/2)^2\pi^2}\\
        a+ 0.01(x^2+y^2)
\end{pmatrix}\label{FTrig}
\end{align}
The parameter $a$ defines defines the height of the objective function, which can be used to test how the various methods perform with potentially large residual problems. In all the experiments here $a=10$. As seen in Figure \ref{fig: FTrig} the Newton method requires 143 deflations to find all of the stationary points, and so guarantee finding all the local minima. Both the ``good'' and ``bad''  Gauss--Newton methods find all the local minima in just 42 deflations. Only the plot from the ``good'' Gauss--Newton method is shown as the result is almost identical to the one produced by the ``bad'' method.  
   
\begin{figure}[!ht]
    \centering
    \includegraphics[width =0.8\textwidth]{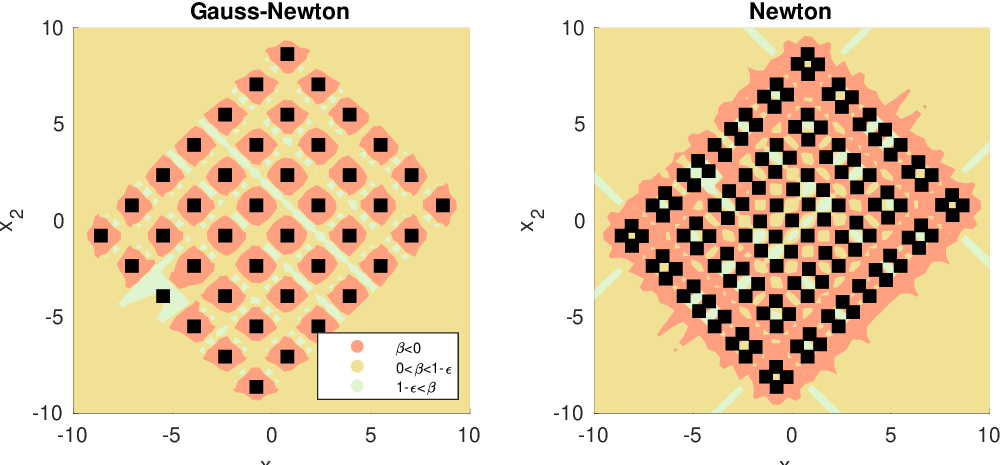}
    \caption{Multiple minima found to the nonlinear least squares problem given by \eqref{FTrig}. Left: all 42 minima found by the ``good'' Gauss--Newton algorithm. Right: the 143 stationary points found by using the Newton method for optimization algorithm. Note that in the green region $1-\epsilon <\beta$, in the yellow region $0<\beta<1-\epsilon$, and in the red region $\beta<0$, for $\epsilon = 0.01$.}    \label{fig: FTrig}
\end{figure}
We  compare the convergence rates of all three methods discussed in this paper in Figure \ref{fig: FTrig Compare}, clearly showing that all three methods have a quadratic local convergence rate after period of non-convergence. It also shows that the number of iterations required for each deflation can vary wildly.

\begin{figure}[!ht]
    \centering
    \includegraphics[width =0.8\textwidth]{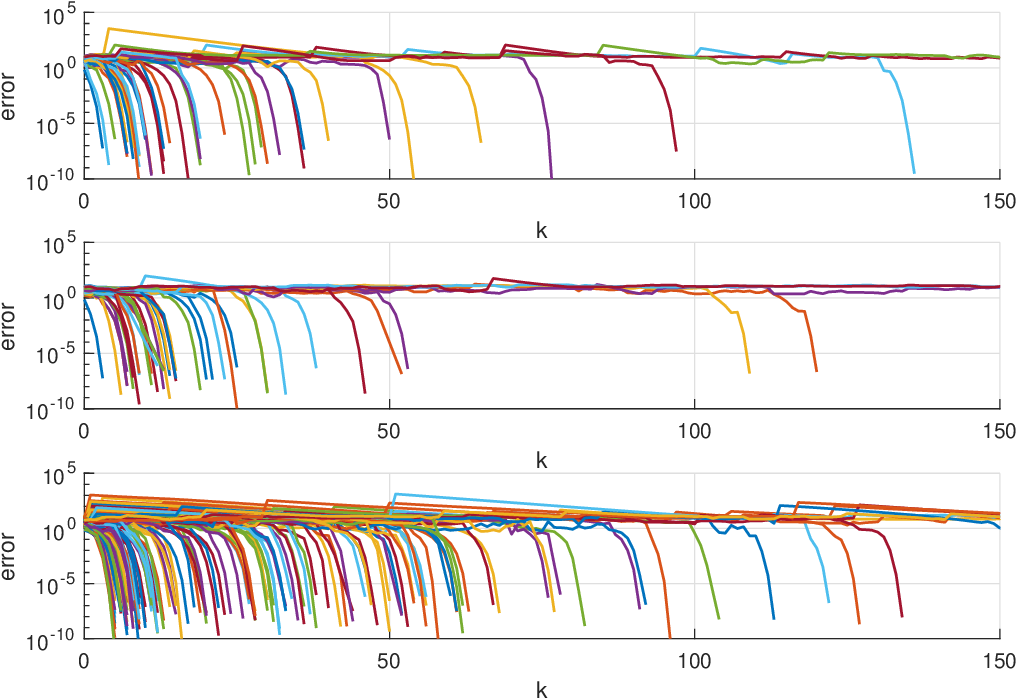}
    \caption{Comparison of the convergence behaviour for computing all solutions to the nonlinear least squares problem given by \eqref{FTrig}. Top: ``good'' Gauss–Newton (42 local minima). Middle: ``bad'' Gauss--Newton (42 local minima). Bottom: Newton for Optimization (143 stationary points). All methods started from an initial guess of \texttt{[1;3]}.}
    \label{fig: FTrig Compare}
\end{figure}

We may also use this example to compare with other global minimization methods, such as Matlab's `MultiStart'  algorithm, from the global minimization toolbox \cite{inc_global_2024}.

Using MultiStart with the `lsqnonlin' solver on the unconstrained problem only finds a handful of minima, even when 10000 start points are used.  When the method is constrained to the 20 by 20 square, centred about the origin, MultiStart finds all 42 minima consistently with only 300 start points; when constrained to the 40 by 40 square however not all 42 minima are consistently found. The bounded MultiStart method still requires many more functions evaluations than the Deflated Gauss--Newton methods as can be seen in Figure \ref{fig: MultiStart Compare}, even when lsqnonlin is set up to use the Jacobian not just use a finite difference approximation. The figure also shows how the Gauss--Newton methods are  faster with respect to computation time. 

\begin{figure}[!ht]
    \centering
    \includegraphics[width =0.9\textwidth]{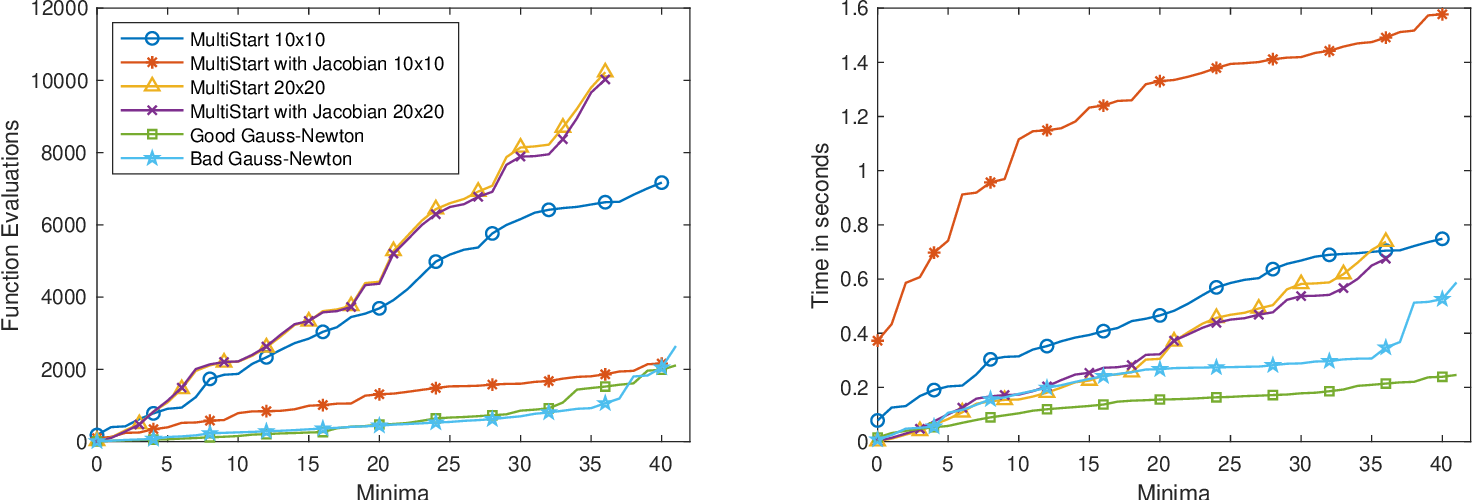}
    \caption{ Left: Cumulative number of function evaluations required for each method to find all 42 local minima. Right: cumulative time to compute the local minima on a 2021 M1 Macbook Pro. For the multistart methods 300 initial points are used, with function evaluations/timings including all the points tried up until the 42nd local minimum is found.}
    \label{fig: MultiStart Compare}
\end{figure}

We have performed experiments to investigate the effect of the value of $\epsilon$ on the convergence of the ``good'' and ``bad'' deflated Gauss--Newton methods. In Figure \ref{fig: epsilon convergence comparison}, we observe that the convergence rate of ``good'' deflated Gauss--Newton is largely unaffected by varying $\epsilon$, whereas the convergence  of the ``Bad'' deflated Gauss--Newton method is greatly affected. The bottom-left plot shows that even though the ``Bad'' method converges, all solutions it finds after deflation have a large value of $\|\nabla f(x^k)\|$, signifying that the solutions are not actually minima of $f$. As discussed in section \ref{Sec. BadGN Convergence} this is because when $\epsilon =0$ the ``bad'' method converges to local minima of $\mu^2f$ not $f$, indeed this is one reason we call it ``bad''. However Figure \ref{fig: epsilon convergence comparison} also shows that when $\epsilon>0$ the method does converge to minima of $f$, as undeflated steps (the lines in bold) are taken when close to a minimum.

\begin{figure}[!ht]
    \centering
    \includegraphics[width =0.8\textwidth]{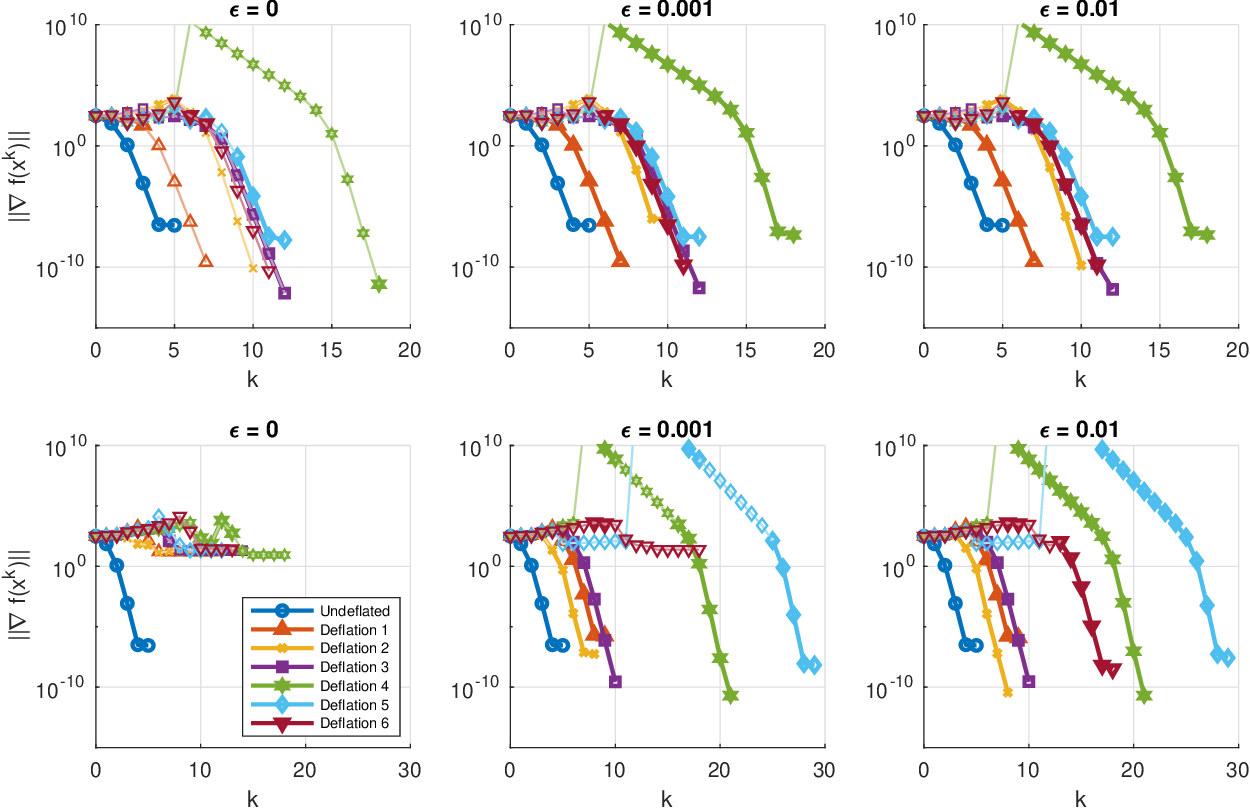}
    \caption{Comparison of the convergence behaviour of the two deflated Gauss--Newton methods, and the effect of different values of  $\epsilon $ on the convergence rate, for the first 7 minima of function \ref{FTrig}. Top: ``Good'' Gauss--Newton. Bottom: ``Bad'' Gauss--Newton. The bold (thinker) lines indicate the steps where undeflated Gauss--Newton steps are taken, and the not bold lines indicate where  the deflated steps are taken. $\nabla f(x^k)$ is as defined in \eqref{eqn:gradf}.}
    \label{fig: epsilon convergence comparison}
\end{figure}

\subsection{Multiple solutions of nonlinear BVPs by Fourier extension}\label{subsec FE}

Fourier extension is a technique to approximate functions on an arbitrary bounded set $\Omega \subset \mathbb{R}^d$ using Fourier series that are periodic on a box $B = [a_1,b_1] \times \cdots \times [a_d,b_d]$ containing $\Omega$. We stick to the case $d = 1$, $\Omega = [0,1]$ and $B = [-1,1]$ in what follows. For a smooth function $f : [0,1] \to \mathbb{R}$, a Fourier extension approximation is given by
\begin{equation}
    f_N(x) = \sum_{j=-n}^n c_j \mathrm{e}^{\mathrm{i} j \pi x},
\end{equation}
where $N = 2n+1$. Note that $f_N$ is periodic on $[-1,1]$. 

Fourier extensions inherit several nice properties from the Fourier basis: we can find the derivative of $f_N$ by simply multiplying its coefficients by $\mathrm{i}j\pi$, we can evaluate $f_N(x_k)$ on an equispaced grid 
\begin{equation}\label{eqn:FEpts}
\{x_k = k/m : k = 0,\ldots,m\},
\end{equation}
where $m \geq 2n$ in $\mathcal{O}(m\log m)$ operations by the Inverse Fast Fourier Transform (IFFT), they have spectral approximation properties \cite{webb_pointwise_2020}, and we can compute convolutions in $\mathcal{O}(N \log N)$ operations \cite{xu_fast_2017}. However, computation of the coefficients $c_j$ is extremely ill-conditioned because there exist nonzero coefficients such that $f_N$ approximates 0 on $[0,1]$ while its values on $[-1,0]$ are unconstrained. Despite this ill-conditioning, adequate coefficients $c_j$ can be computed stably by solving an oversampled least squares collocation problem at equally spaced points in $[0,1]$, \cite{adcock_frames_2019, adcock_frames_2020, matthysen_fast_2016, matthysen_function_2018}. 

In the following examples we discretize and solve nonlinear Boundary Value Problems (BVPs) as oversampled nonlinear least squares collocation problems at equally spaced points in $[0,1]$. We will give two examples to show that the ``good'' and ``bad'' deflated Gauss--Newton methods can be effective in the cases where there are multiple isolated solutions to a nonlinear boundary value problem.

There are two ways in which the resulting optimization problems are more complicated than those presented so far in the paper. First, the solution is a complex-valued vector $\mathbf{c}$, which might appear to cause problems for the condition $\langle \nabla \eta, p^k\rangle>\epsilon$ in our algorithms. However, the analogous condition in complex arithmetic for measuring to what extent $p^k$ is a direction of ascent for $\eta$, is $\mathrm{Re}\langle\nabla\eta,p^k\rangle>\epsilon$. Second, the norm used to measure distance in the deflation operator should not be the $2$-norm of $\mathbf{c}$. This is because there are many different coefficient vectors that yield approximately the same function, so deflation measuring distance between coefficient vectors will not be particularly effective. We instead use the norm
\begin{equation}
    \| \mathbf{c} \|_{\mathrm{FE}} := \left(\frac1{m+1}\sum_{k=0}^m \left|\sum_{j=-n}^n c_j \mathrm{e}^{\mathrm{i} j \pi x_k}\right|^2 \right)^{1/2},
\end{equation}
which is the 2-norm of the function values on the equispaced grid \eqref{eqn:FEpts}. This means that the deflation process measures distance between coefficients by measuring the difference between the associated function values.

\subsubsection{Bratu equation}
The Bratu equation is given by \cite[Chapter 16]{trefethen_exploring_2017}:

\begin{align}\label{eqn:bratu}
    u'' + 3\exp(u) = 0,\qquad u(0) = 0,\qquad u(1) = 0
\end{align}

After discretization, we minimize the following nonlinear least squares residual for $\mathbf{c} \in \mathbb{C}^{N}$:
\begin{equation}
\begin{split}
    f(\mathbf{c}) =& \frac1{m+1}\frac12\sum_{k=0}^m \left|\sum_{j=-n}^n -j^2\pi^2 c_j \mathrm{e}^{\mathrm{i} j \pi x_k} + 3 \exp\left(\sum_{j=-n}^n c_j \mathrm{e}^{\mathrm{i} j \pi x_k}\right)\right|^2 \\
    &  \qquad +\, \frac12\left|\sum_{j=-n}^n c_j\right|^2 \, + \, \frac12\left|\sum_{j=-n}^n c_j \, \mathrm{(-1)}^j\right|^2. \label{eqn:bratunls}
    \end{split}
\end{equation}
The first sum corresponds to enforcing the ODE at the collocation points $x_k$ (defined in equation \eqref{eqn:FEpts}) and the last two terms correspond to enforcing the two boundary conditions.

In the experiment depicted in Figure \ref{fig: Bratu}, we set $n = 100$, $m = 400$, so that there are 403 least squares constraints and 201 unknowns. The initial guess is the zero function, both initially, and each time we deflate. 
\begin{figure}[!ht]
    \centering
    \includegraphics[width =0.8\textwidth]{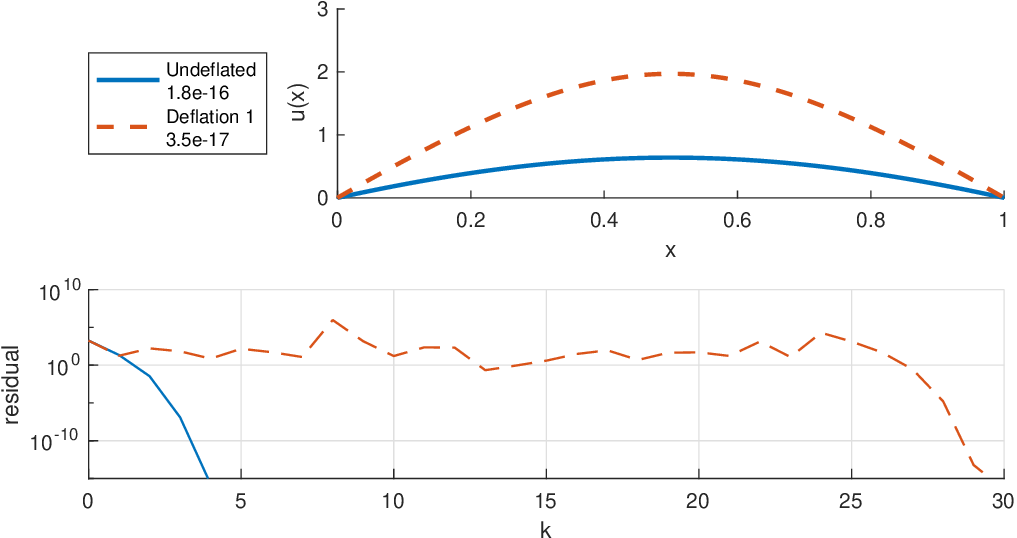}
    \caption{Multiple solutions of the Bratu equation \eqref{eqn:bratu}, computed by solving the nonlinear least squares problem \eqref{eqn:bratunls}. Top: plots of the functions that the ``good'' Deflated Gauss--Newton algorithm found. Bottom: the residual against each iteration.}
    \label{fig: Bratu}
\end{figure}

The convergence behaviour seen in Figure \ref{fig: Bratu} is typical, and the analogous plot using the Bad deflated Gauss--Newton method is almost the same. What we see for the deflated iteration is that there is a long period of non-convergence, before finding a basin of attraction and converging at the same rate as the undeflated Gauss--Newton method in that basin. One reason why this non-convergence happens is because the initial guess for the iteration is set to the be the same as it was for the first undeflated iteration (which in this case is zero). The local convergence is the same as for the undeflated Gauss--Newton method by design --- at a new local minima $x$ we must have $\langle \nabla \eta(x), p(x)\rangle = 0$, so in a neighbourhood of the new local minima we must have $\langle \nabla \eta(x), p(x) \rangle \leq \epsilon$, so the undeflated step is taken.

\subsubsection{Carrier equation}\label{Ex. Carrier}
The Carrier equation \cite[Chapter 16]{trefethen_exploring_2017}, is
\begin{align}\label{eqn:carrier}
    0.05 \, u'' + 8x(1-x)u + u^2 = 1,\qquad u(0) = 0,\qquad u(1) = 0.
\end{align}
Similarly to the Bratu example, we minimize the following nonlinear least squares residual for $\mathbf{c} \in \mathbb{C}^{N}$:
\begin{equation}
\begin{split}
    f(\mathbf{c}) =& \frac1{m+1}\frac12\sum_{k=0}^m \left|\sum_{j=-n}^n \left(-0.05 \, j^2\pi^2 + 8 x_k(1-x_k)\right)c_j \mathrm{e}^{\mathrm{i} j \pi x_k} + \left(\sum_{j=-n}^n c_j \mathrm{e}^{\mathrm{i} j \pi x_k}\right)^2 - 1\right|^2 \\
    & \qquad \quad + \, \frac12\left|\sum_{j=-n}^n c_j\right|^2 \, + \, \frac12\left|\sum_{j=-n}^n c_j \, \mathrm{(-1)}^j\right|^2.\label{eqn:carriernls}
\end{split}
\end{equation}

In the experiment depicted in Figure \ref{fig: Carrier}, we set $n = 100$, $m = 400$, so that there are 403 least squares constraints and 201 unknowns. The initial guess is the zero function, both initially, and each time we deflate. 

\begin{figure}[!ht]
    \centering
    \includegraphics[width =0.8\textwidth]{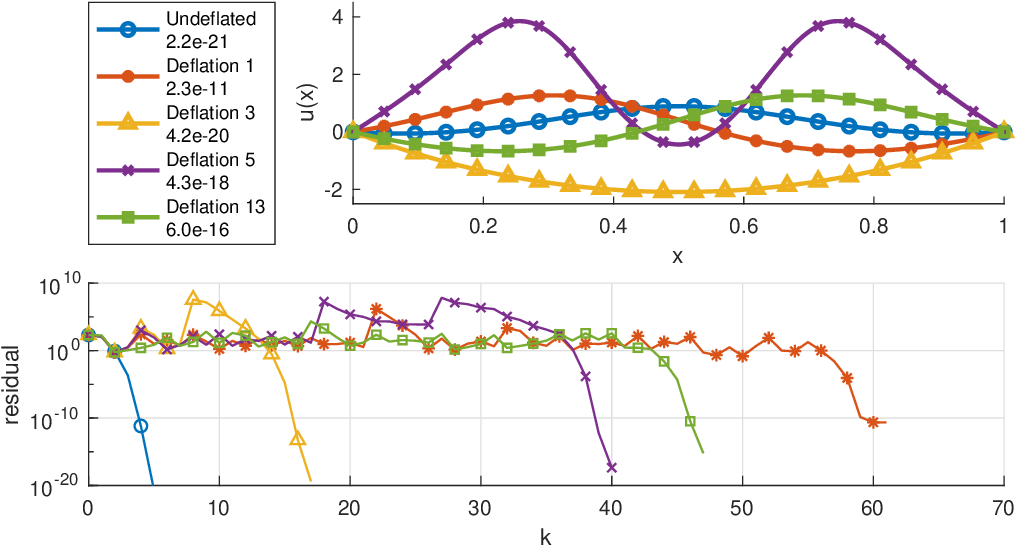}
    \caption{Multiple solutions of the carrier equation \eqref{eqn:carrier}, computed by solving the nonlinear least squares problem \eqref{eqn:carriernls}. Top: plots of the functions that the ``good'' Deflated Gauss--Newton algorithm found. Bottom: the residual against each iteration.}
    \label{fig: Carrier}
\end{figure}

Interestingly, the fourth solution found in Figure \ref{fig: Carrier} is distinct from the solutions found in \cite{trefethen_exploring_2017}. The solution persists when we increase the degrees of freedom in the approximation, and the residual is very small, so we conclude that this is likely a proper solution of the differential equation that was missed, and not a feature of the discretization.
 
\begin{figure}[!ht]
    \centering
    \includegraphics[width =0.8\textwidth]{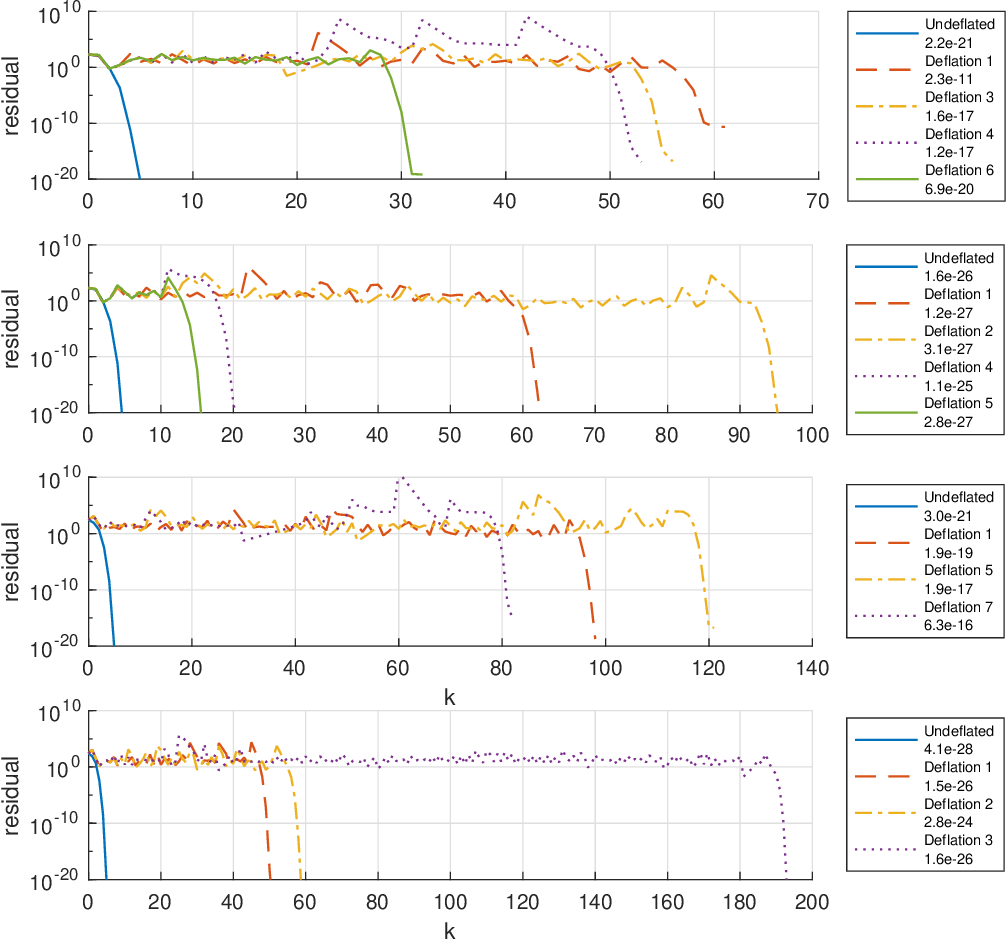}
    \caption{Comparison of the convergence behaviour for computing multiple solutions to the Carrier equation. Top: ``good" Gauss--Newton with initial guess $\mathbf{c} = \mathbf{0}$. Second: ``bad'' Gauss--Newton with initial guess $\mathbf{c} = \mathbf{0}$. Third: ``good'' Gauss--Newton with initial guess $u(x) = x(1-x)$. Bottom: ``bad'' Gauss--Newton with initial guess $u(x) = x(1-x)$. We do not mean to imply any value judgement on ``good'' or ``bad'' deflation techniques' convergence properties for this problem, merely that the initial period of non-convergence can vary wildly in length, and in unpredictable ways.}
    \label{fig: Carrier Compare}
\end{figure}
\subsection{Inverse eigenvalue problems}
One application where the deflated methods outlined in this paper are applicable is to the solution of inverse eigenvalue problems (IEP) that arise from spectroscopic techniques, such as inelastic neutron scattering (INS) experiments \cite{lovesey_theory_2003}. INS can be used to accurately measure magnetic excitations in materials that possess interacting electron spins. When studying finite spin systems, such as single ions or molecular-based magnets, INS is used to experimentally quantify the energy between quantum spin states that relate directly to eigenvalues of a Hamiltonian that describes the quantum spin dynamics of the compound \cite{RevModPhys.85.367, baker_neutron_2012, baker_spectroscopy_2014}. Such information is of both fundamental and technological importance for further understanding quantum phenomena and how electronic quantum spins could be implemented for novel quantum applications such as information processes and sensing. Accurate quantification of the quantum spin dynamics of magnetic molecules following an INS experiment requires determining parameters associated with an appropriate spin Hamiltonian model. The spin Hamiltonian operator that gives rise to these experimentally determined eigenvalues is to be computed by fitting, such as nonlinear least squares.

\begin{definition}[Inverse Eigenvalue Problem \cite{chu_inverse_2005}]
Given complex numbers $\lambda_1,\dots,\lambda_n$ and a basis of matrices $A_0,\dots,A_\ell \in \mathbb C^{n\times n}$, let 
\begin{equation}\label{IEP}
    r( x) = 
    \begin{pmatrix}
     \lambda_1( x) - \lambda_1 \\
    \vdots \\
    \lambda_{n} ( x) - \lambda_{n} 
    \end{pmatrix} .
\end{equation}
and   $\lambda_1(x),\dots\lambda_{n} (x)$  be the eigenvalues of the matrix 
\begin{equation}
A( x) = A_0 + \sum^\ell_{i=1} x_i A_i.
\end{equation}
Then the inverse eigenvalue problem is to find the parameters $x \in \mathbb R^\ell$ that minimize
\begin{equation}
    f( x) = 
    \frac 1 2 ||r( x)||^2_2.
\end{equation}
\end{definition}

For the INS example that we will consider, the $\lambda_1,\dots,\lambda_n$ are the experimental eigenvalues and calculated from the experimental data and the basis matrices are  Stevens operators\footnote{See \cite{gatteschi_molecular_2006} and \cite{rudowicz_generalization_2004} for more general details on Stevens operators.}. The operators are all calculated using the easyspin MATLAB package \cite{stoll_easyspin_2006}. 
\subsubsection{Mn12}
The selected example concerns the INS spectrum of Manganese-12-acetate. This molecule gained importance following the identification that it acts like a nano-sized magnet with a molecular magnetic coercivity and the identification of quantum tunnelling of magnetisation see \cite{friedman_macroscopic_1996, sessoli_magnetic_1993}. The INS spectrum of Manganese-12-acetate was measured in order to obtain a precise description of an appropriate Spin Hamiltonian model that accurately describes its magnetic properties. The Hamiltonian of the system is a $21\times21$ matrix  and and can be modelled using 4 basis matrices \cite{bircher_transverse_2004}:
\begin{equation}\label{Ham}
    A(B_2^0, B_4^0, B_2^2, B_4^4) =% H_{\text{ZFS}} = 
    B^0_2O^0_2 + B^0_4O^0_4 +B^2_2O^2_2 +B^4_4O^4_4 \in \mathbb R^{21\times 21}
\end{equation}

Where the $B$s are the parameters to be found and the $O$s are the Stevens operators are defined, in this case with a spin of $S = 10$, as: 
\begin{align*}
O_2^0 &= 3S_z^2 - X\\
O_2^2 &= \frac 1 2 (S_+^2+ S_-^2)\\
O_4^0 &= 35S_z^4 - (30X-25)S^2_z +(3X^2-6X)\\
O_4^4 &= \frac 1 2 (S_+^4+ S_-^4)
\end{align*}
where $X = S(S+1)I\in \mathbb R^{S(S+1)\times S(S+1)}$, and
\begin{align*}
    (S_z)_{k,k} \quad&= (S + 1-k)\\
    (S_+)_{k,k+1} &=  \sqrt{k(2S+1-k)}\\
    (S_-)_{k+1,k} &=  \sqrt{k(2S+1-k)}
\end{align*}

 % A plot of two of these parameters, the $O^4_4$ and the $O^2_2$ Stevens coefficients,
 % can be seen in Figure \ref{fig:MnPlot}, which makes it clear that there are multiple possible solutions; 
 Four distinct parameter sets are given as solutions to this problem, as found in \cite{bircher_transverse_2004}. Since we do not have access to the original experimental data  the eigenvalues used were reverse engineered from the  solutions given in \cite{bircher_transverse_2004}. The experiment depicted in Figure \ref{fig: Mn12} shows the convergence rates of the three different methods applied to the inverse eigenvalue problem \eqref{IEP}.

\begin{figure}[!ht]
    \centering
    \includegraphics[width =0.8\textwidth]{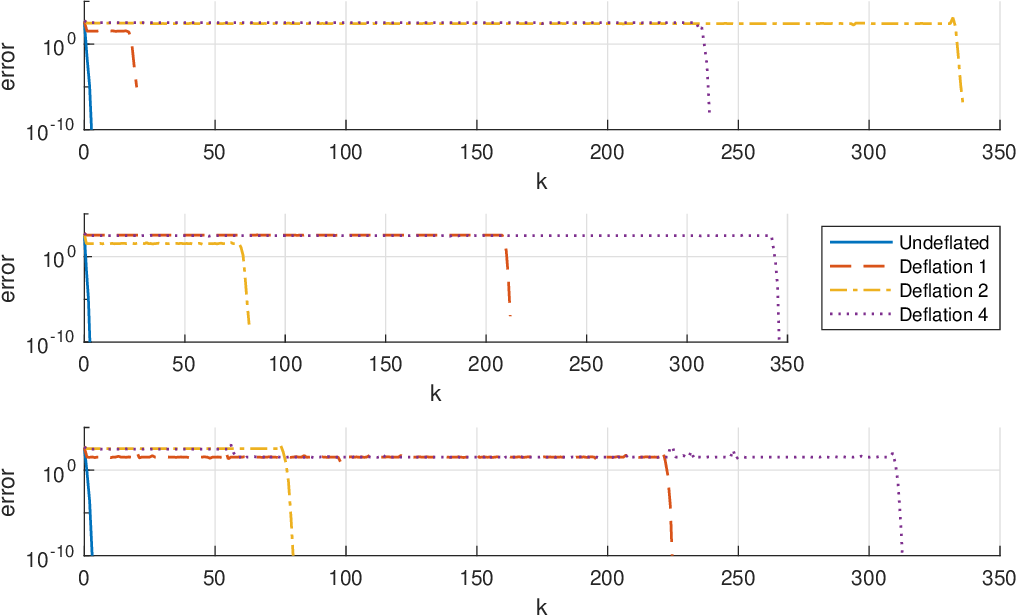}
    \caption{Comparison of the convergence behaviour for computing all four solutions to the Mn$_{12}$ inverse eigenvalue problem. Top: ``good'' Gauss–Newton. Middle: ``bad'' Gauss--Newton. Bottom:
Newton for Optimization.}
    \label{fig: Mn12}
\end{figure}

\section{Conclusions}
We have presented two new deflated optimization algorithms, both based on the Gauss--Newton method. Both algorithms find local minima, without converging to other stationary points, thus reducing the number of deflations needed compared to the deflated Newton method applied to the first order optimality conditions. They also do not require the calculation of a Hessian matrix, allowing the application of the method to a wider range of problems. We then showed how these methods can be effective for problems with a high dimension (up to 400) or a high number of local minima  (up to 42). We limited ourselves to cases where $J_r$ is full rank, a constraint which could be lifted in future work. It is not immediately apparent which of the ``good'' or ``bad'' methods is best, but both have been shown to have locally quadratic convergence to local minima comparable to the undeflated Gauss--Newton method (under similar assumptions). Lastly, we have shown that these methods can be used to solve least squares inverse eigenvalue problems arising from Inelastic Neutron Scattering experiments.

There are several aspects of deflation techniques not explored in this paper. For instance, we considered only a single type of deflation operator (from equations \eqref{eqn. Deflation Operator} and \eqref{eqn. Multi Shift Operator}) and we always chose the subsequent initial guesses to be the same as that used for the first, undeflated iteration. On the latter point, it could be argued that starting from the same initial guess could lead to the initial period of non-convergence being prolonged because the iterates revisit parts of the solution space that were visited in early deflations before deflation ``kicks in''. However, in Figure \ref{fig: Simple Beta Contour} we see that deflation reverses the direction of the first step after two deflations. It could be possible to develop a heuristic strategy for choosing the subsequent initial guesses in a way that reduces the period of non-convergence. Alternatively, an approach similar to MultiStart, but with deflation seems promising.

\section*{Acknowledgements}
We are grateful to Nick Higham, Jonas Latz and Fran\c{c}oise Tisseur for feedback on early drafts of this paper. We also thank the reviewers for their insightful comments that improved this paper greatly. MW thanks the Polish National Science Centre (SONATA-BIS-9), project no. 2019/34/E/ST1/00390, for the funding that supported some of this research. ABR thanks the University of Manchester for a Dean's  Doctoral Scholarship.

\FloatBarrier
%\newpage
\bibliographystyle{siamplain}
\bibliography{references}
\begin{appendix}
\section{Proof of Theorem \ref{Thrm: Deflated Newton}}\label{Deflated Newton proof}
\begin{proof}
The Newton step for the deflated problem, $\hat{p}^k$, is given by
\begin{equation}
\hat{p}^k = - J_{\mu r}(x^k)^{-1}\mu(x^k)r(x^k).
\end{equation}
The Jacobian of the deflated system can be expanded using the product rule,
\begin{equation}
    J_{\mu r}(x) =   \mu(x) J_r(x) + r(x) {\nabla\mu(x)}^T.
\end{equation}
For ease of notation, let us write
$$
\mu(x)^{-1} J_{\mu r}(x) = J_r(x) + r(x) \nabla \eta (x)^T,
$$
where $\eta(x) = \log(\mu(x))$, so $\nabla \eta = \mu^{-1} \nabla \mu$. The Sherman--Morrison formula implies
\begin{equation}
 J_{\mu r}(x)^{-1}\mu(x) = J_r(x)^{-1} - \frac{J_r(x)^{-1} r(x) \nabla \eta(x)^T J_r(x)^{-1}}{1 + \nabla \eta(x)^T J_r(x)^{-1}r(x)}.
\end{equation}
Using the undeflated step $p^k = -J_r(x^k)^{-1} r(x_k)$, we can write
\begin{equation}
 J_{\mu r}(x^k)^{-1}\mu(x^k) = J_r(x^k)^{-1} + \frac{p^k \nabla \eta(x^k)^T J_r(x^k)^{-1}}{1 - \langle \nabla \eta(x), p^k\rangle}.
\end{equation}
Now we can substitute this into the formula for $\hat{p}^k$, and use the formula for $p^k$ to obtain
\begin{align}
\hat{p}^k &= -J_r(x^k)^{-1}r(x^k) - \frac{p^k \nabla \eta(x^k)^T J_r(x^k)^{-1}r(x^k)}{1 - \langle \nabla \eta(x^k), p^k\rangle} = (1 - \langle \nabla \eta(x^k), p^k\rangle)^{-1} p^k,
\end{align}
which is equivalent to the deflated Newton step given in Algorithm \ref{alg. Deflated Newton}.
\end{proof}
\end{appendix}
\end{document}